\theoremstyle{plain}
\newtheorem{theorem}{Theorem}[section]
\newtheorem{proposition}[theorem]{Proposition}
\newtheorem{lemma}[theorem]{Lemma}
\newtheorem{corollary}[theorem]{Corollary}
\newenvironment{Prf}{{\bf Proof:} }{\hfill $\Box$\mbox{}}
\theoremstyle{definition}
\newtheorem{definition}[theorem]{Definition}
\newtheorem{example}[theorem]{Example}
\newtheorem{remark}[theorem]{Remark}
\def\Ker{\operatorname{Ker}}
\def\St{\mathsf{St}}
\def\Cost{\mathsf{Cost}}
\def\GpGd{\bm{\mathsf{GpGd}}}
\def\XMod{\bm{\mathsf{XMod}}}
\def\qed{\hfill $\Box$}
\def\inc{\operatorname{inc}}
\def\E{\operatorname{E}}
\def\XMod{\bm{\mathsf{XMod}}}
\def\GpGd{\bm{\mathsf{GpGd}}}
\def\Gp{\bm{\mathsf{Gp}}}
\def\XSq{\bm{\mathsf{X^2Mod}}}
\def\DGG{\bm{\mathsf{DbGpGd}}}
\def\GGdC/G{\bm{\mathsf{GpGpdCov/G}}}
\def\GGdCov/X{\bm{\mathsf{GpGpdCov/\pi X}}}
\def\GdC/G{\bm{\mathsf{GpdCov/G}}}
\def\GdA(G){\bm{\mathsf{GpdAct(G)}}}
\def\Act(G){\bm{\mathsf{GpdAct(G)}}}
\def\Cov/G{\bm{\mathsf{GpdCov/G}}}
\def\C{\bm{\mathsf{C}}}
\def\epsilon{\varepsilon}
\begin{document}
\title{Crossed modules, double group-groupoids and crossed squares}

\author[a]{Sedat TEMEL\thanks{S. Temel (e-mail : sdttml@gmail.com)}}
\author[b]{Tunçar ŞAHAN\thanks{T. Şahan (e-mail : tuncarsahan@gmail.com)}}
\author[c]{Osman MUCUK\thanks{O. Mucuk (e-mail : mucuk@erciyes.edu.tr)}}
\affil[a]{\small{Department of Mathematics, Recep Tayyip Erdoğan University, Rize, TURKEY}}
\affil[b]{\small{Department of Mathematics, Aksaray University, Aksaray, TURKEY}}
\affil[c]{\small{Department of Mathematics, Erciyes University, Kayseri, TURKEY}}

\date{}

\maketitle

\begin{abstract}

The purpose of this paper is to obtain the notion of crossed modules over group-groupoids considering split extensions; and prove a categorical equivalence between  these types of crossed modules and  double group-groupoids. This equivalence enables us to produce various examples of double groupoids.

\end{abstract}

\noindent{\bf Key Words:} Group-groupoid, crossed module, double group-groupoid, crossed square.
\\ {\bf Classification:} 20L05, 22A22, 18D35.

\section{Introduction}

In this paper we are interested in crossed modules of group-groupoids associated with the split extensions and  producing  new  examples of double groupoids in which the sets of squares, edges and points are group-groupoids.

The idea  of crossed module over groups  was initially introduced by Whitehead in \cite{Wth1} and \cite{Wth2} during the investigation of the properties of second relative homotopy groups for topological spaces. The categorical equivalence between  crossed modules over groups and group-groupoids which are widely called in literature \emph{2-groups} \cite{baez-lauda-2-groups}, {\em $\mathcal{G}$-groupoids} or \emph{group objects} in the category of groupoids  \cite{BS2},  was proved by Brown and Spencer in \cite[Theorem 1]{BS2}. Following this equivalence  normal and quotient objects in these two categories have been recently compared and associated objects in the category of group-groupoids have been characterized in \cite{Mu-Sa-Al}.   This categorical equivalence   has also been extended by Porter in \cite[Section 3]{Por} to a more general  algebraic category $\C$ called category of groups with operations whose idea goes back to Higgins \cite{Hig} and Orzech \cite{Orz,  Orz2}.  This result is used for example in  \cite{Ak-Al-Mu-Sa} as a  tool to extend some results about topological groups to the topological groups with operations.

Double groupoids which can be thought as a groupoid objects in the category of groupoids were introduced by Ehresmann in  \cite{Ehr,Ehr2}  and have been concerned by many mathematicians because of their connections with several branch of mathematics. For example according to  \cite [Chapter 6]{BHS}, the structure of crossed module is inadequate to give a proof of 2-dimensional Seifert-van-Kampen Theorem and hence one needs the idea of double groupoid. For the purpose of obtaining some examples of double groupoids,  Brown and Spencer in \cite{BS2}  proved the categorical equivalence between crossed modules over groups and  special double groupoids in the sense that the horizontal  and vertical groupoids agree and  the set of points is singleton. Then  the categorical equivalence of crossed modules over groupoids and  double groupoids with thin structures was proved in \cite[Chapter 6]{BHS}.)


In this paper following Porter's methods in \cite{Por} we introduce the notion of crossed module over group-groupoids via  split extensions of short exact sequences and obtain the double groupoids associated with  these crossed modules. Moreover we have a categorical equivalence between these types of crossed modules and  double group-groupoids, which enables us to have  some varieties  of examples for double groupoids.

\section{Preliminaries}

A groupoid is defined to be a small category in which every arrow has an inverse \cite{Br1}.  More precisely by a {\em groupoid}  $G$ on $G_0$ we mean a set $G_{0}$ of {\em objects},  a set $G$ of arrows,  together with  {\em initial} and {\em final} point maps $d_0, d_1\colon G\rightarrow G_{0}$ and {\em object inclusion} map $\varepsilon  \colon G_{0} \rightarrow G$ such that $d_0\varepsilon=d_1\varepsilon=1_{G_{0}}$.  Moreover there is a  associative partial composition among the arrows with the property that if $a,b\in G$ provided $d_1(a)=d_0(b)$, then the {\em composite} $b\circ a$ exists such that $d_0(b\circ a)=d_0(a)$ and $d_1(b\circ a)=d_1(b)$. For $x\in G_{0}$ the element $\varepsilon (x)$ denoted by $1_x$ acts as the identity and each arrow $a$ has an inverse $a^{-1}$ such that $d_0({a}^{-1})=d_1(a)$, $d_1({a}^{-1})=d_0(a)$, $a\circ {a}^{-1} =(\varepsilon d_1)(a)$ and $ {a}^{-1}\circ a=(\varepsilon d_0)(a)$. We denote such a groupoid  only by $G$. A groupoid $G$ with only identity arrows is said to be  \textit{discrete groupoid}.

In a groupoid $G$ the set $d_{0}^{-1}(x)$ for an object $x$ in $G_0$ is called the \textit{star} of $G$ at $x$ and denoted by $\St_Gx$. Similarly the set $d_{1}^{-1}(x)$ is called the \textit{costar} of $G$ at $x$ and denoted by $\Cost_Gx$.

A {\em group-groupoid} $G$ is an internal groupoid  in the category of groups, i.e.,  $G$ and $G_0$ are groups with the property that  the initial and final point maps, object inclusion map,  the inversion and partial composition are group morphisms. An alternative name used in literature  for a group-groupoid is 2-group \cite{baez-lauda-2-groups}.  In a group-groupoid the group operation is written additively while  the composition in the groupoid by $`\circ'$ as above.  Recently, group-groupoid aspect of the  monodromy groupoid was developed in  \cite{Mu-Be-Tu-Na}.

We recall  that a crossed module over groups originally defined by  Whitehead \cite{Wth1,Wth2}, consists of two groups $A$ and $B$, an action of $B$ on $A$ denoted by $b\cdot a$ for $a\in A$ and $b\in B$; and a morphism  $\partial\colon A\rightarrow B$ of groups such that $\partial(b\cdot a)=b+\partial(a)-b$ and $\partial(a)\cdot a_1=a+a_1-a$ for all $a,a_1\in A$ and $b\in B$. We  denote such a crossed module by $(A,B,\partial)$. A morphism  $(f_1,f_2)$ from $(A,B,\partial)$ to $(A',B',\partial')$  is a pair of morphisms of groups $f_1\colon A\rightarrow A'$ and $f_2\colon B\rightarrow B'$  such that $f_2\partial=\partial'f_1$ and  $f_1(b\cdot a)=f_2(b)\cdot f_1(a)$ for  $a\in A$ and $b\in B$.



A {\em  double groupoid} denoted by $\mathcal{G}=(S,H,V, P)$ has the sets $S$, $H$, $V$ and  $P$ of squares, horizontal edges, vertical edges and  points respectively. The set $S$ of the squares has groupoid structures on $H$ on $V$ which are also groupoids on $P$ and the groupoid structures are compatible with each other. In a double groupoid a square $u$ has bounding edges as follows

\[\begin{minipage}{0.5\textwidth}
\begin{xy}
*=<2cm,2cm>\txt{$u$}*\frm{-};
(14,0) *{d_1^hu} ;
(-14,0) *{d_0^hu} ;
(0,13) *{d_0^vu} ;
(0,-13) *{d_1^vu} ;
\end{xy}
\end{minipage}
\begin{minipage}{0.5\textwidth}
\begin{displaymath}
\xymatrix{
\ar[d]  \ar[r] & \circ_h \\
\circ_v & }
\end{displaymath}
\end{minipage}\]
 and the horizontal and vertical compositions of the squares are denoted by $v \circ_h u$ and $v\circ_v u$.

 In particularly if the horizontal and vertical groupoids coincide, then it is said to be a {\it special double groupoid}. According to Brown and Spencer \cite{BS2}  crossed modules over groups and special double groupoids where the set of points is singleton are categorically equivalent. By the detail of the proof for given a crossed module $\partial \colon A\rightarrow B$ there is a special double groupoid  $\mathcal{G}$  in which   the set $S$ of squares consists of the elements
\[ \left(\alpha ~;~\left( a ~\begin{array}{c} c \\   b \\ \end{array} ~ d\right)\right)\]
for  $ a, b, c, d\in B$ with  $\partial(\alpha)=b^{-1}a^{-1}cd$.

The horizontal and vertical compositions of squares are respectively defined to be \[\left(\beta ~;~ c ~\begin{array}{c} f \\   h \\ \end{array} ~ g\right)\circ_h \left(\alpha ~;~ a ~\begin{array}{c} b \\   d \\ \end{array} ~ c\right)=
\left(\alpha \beta^{d^{-1}} ~;~ a ~\begin{array}{c} bf \\   dh \\ \end{array} ~ g\right)\]

\[\left(\tau ~;~ j ~\begin{array}{c} d \\   i \\ \end{array} ~ h\right)\circ_v \left(\alpha ~;~ a ~\begin{array}{c} b \\   d \\ \end{array} ~ c\right) =\left(\alpha^{j}\tau ~;~ aj ~\begin{array}{c} b \\   i \\ \end{array} ~ ch\right) .\]
 See  \cite [Chapter 6]{BHS} for more discussions on double groupoids in which horizontal and vertical edges are same.

\section{Extensions and crossed modules of group-groupoids}

The idea  of groups with operations goes back to  \cite{Hig} and  \cite {Orz} (see also \cite{Orz2})
and it is adapted in  \cite{Por} and  \cite[p.21]{Tamar}  as follows:

A category $\C$  of groups with a set  of  operations $\Omega$ and with a set $\E$  of identities such that $\E$ includes the group laws, and the following conditions hold for the set $\Omega_i$  of $i$-ary operations in $\Omega$ is said to be a {\em category of groups with operations}:

(a) $\Omega=\Omega_0\cup\Omega_1\cup\Omega_2$;

(b) The group operations written additively $0,-$ and $+$ are
the  elements of $\Omega_0$, $\Omega_1$ and
$\Omega_2$ respectively. Let $\Omega_2'=\Omega_2\backslash \{+\}$,
$\Omega_1'=\Omega_1\backslash \{-\}$ and assume that if $\star\in
\Omega_2'$, then $\star^{\circ}$ defined by
$a\star^{\circ}b=b\star a$ is also in $\Omega_2'$. Also assume
that $\Omega_0=\{0\}$;

(c) For each   $\star \in \Omega_2'$, $\E$ includes the identity
$a\star (b+c)=a\star b+a\star c$;

(d) For each  $\omega\in \Omega_1'$ and $\star\in \Omega_2' $, $\E$
includes the identities  $\omega(a+b)=\omega(a)+\omega(b)$ and
$\omega(a)\star b=\omega(a\star b)$.

Topological version of this definition was given in \cite{Mu-Tu} and monodromy groupoids of internal groupoids within topological groups with operations were investigated in \cite{Mu-Ak}.

According to  \cite{Por} for groups with operations $A$ and $B$  an {\em extension} of $A$ by $B$ is
an exact sequence
\[\xymatrix{
	\bm{\mathsf{0}} \ar[r] &   A \ar@{->}[r]^{\imath} &   E \ar@{->}[r]^{p} &   B 
	\ar[r] & \bm{\mathsf{0}} }\]
in which $p$ is surjective and $\imath$ is the kernel of $p$.  It is {\em split } if there exists a morphism $s\colon  B \to E$ such that $p s = 1_B$.  For given  such a split extension  an action of $B$ on $A$  called {\em derived action} which is due to  Orzech \cite[p.293]{Orz} is defined by
\begin{equation} \label{eq12} \begin{array}{rcl}
b\cdot a & = &s(b)+a-s(b)\\
 b\star a  & = &s(b)\star a.
   \end{array} \tag{3.1}\end{equation}
for $b\in B$, $a\in A$ and $\star\in \Omega'_2$.

In the rest of this section applying the methods of  \cite{Por}  to the group-groupoids we obtain the notion of crossed modules for them: Let $G$ and $  H$ be two group-groupoids. We define an {\em extension} of $  H$ by $  G$ to be  a short exact sequence of group-groupoids
   \[\xymatrix{
	\mathcal{E} : & \bm{\mathsf{1}} \ar[r] &   G \ar@{->}[r]^{\iota} &   K \ar@{->}[r]^{p} &   H 
	\ar[r] & \bm{\mathsf{1}} } \]
where $\bm{\mathsf{1}}$ represents a singleton group-groupoid. Hence $  G=\Ker p$ and   $p$ is an epimorphism; and therefore  $ G$ can be considered as a normal subgroup-groupoid of $ K$. For given such an extension  we have the  group extensions
\[\xymatrix{
	\mathcal{E}_1 : & 0 \ar[r] & G \ar@{->}[r]^{\iota_{1}} & K \ar@{->}[r]^{p_{1}} & H 
	\ar[r] & 0  }\]
\[\xymatrix{
	\mathcal{E}_0 : & 0 \ar[r] & G_0 \ar@{->}[r]^{\iota_{0}} & K_0 \ar@{->}[r]^{p_{0}} & H 
	\ar[r] & 0  }\]
along with the morphisms of them
\[\xymatrix{
	\mathcal{E}_1 : & 0 \ar[r] & G \ar@{->}[r]^{\iota_{1}} \ar@<.8ex>[d]|-{d_{0}}\ar@<-.8ex>[d]|-{d_{1}} & K \ar@{->}[r]^{p_{1}} \ar@<.8ex>[d]|-{d_{0}}\ar@<-.8ex>[d]|-{d_{1}} & H \ar@<.8ex>[d]|-{d_{0}}\ar@<-.8ex>[d]|-{d_{1}} 
	\ar[r] & 0 \\
	\mathcal{E}_0: & 0 \ar[r] & G_0 \ar@{->}[r]_{\iota_{0}} \ar@/^1pc/[u]^{\varepsilon} & K_0 \ar@{->}[r]_{p_{0}} \ar@/^1pc/[u]^{\varepsilon} & H \ar@/^1pc/[u]^{\varepsilon} 
	\ar[r] & 0  }\]
Hence a group-groupoid extension $\mathcal{E}$ can be thought as an internal  groupoid in the category of group extensions.

 Replacing groups with operations in \cite{Por}  with group-groupoids we define  an   extension to be   {\it split} if there exists a morphism $s\colon   H\rightarrow   K$ of group-groupoids such that $ps=1_{  H}$
\[\xymatrix{
	\mathcal{E} : & \bm{\mathsf{1}} \ar[r] &   G \ar@{->}[r]^{\iota} &   K \ar@{->}[r]_{p} &   H \ar@/_/[l]_{s} \ar[r] & \bm{\mathsf{1}} }\]
In this case both extensions of groups $\mathcal{E}_1$ and $\mathcal{E}_0$ above become split.

We now obtain semidirect product  of group-groupoids as follows:
Let $\mathcal{E}$ be  a split extension  of $  H$ by $  G$. Then the  functor $\theta\colon   G\times   H\rightarrow   K$ defined  by $\theta(a,b)=a+s(b)$ on arrows has an inverse $\theta^{-1}(c)=(c-sp(c),p(c))$ for $c\in K$; and  $  G\times   H$ is a  group-groupoid with the group addition defined by
\begin{align*}
(a,b)+(a_1,b_1) & = \theta^{-1}\left(\theta\left((a,b)+(a_1,b_1)\right)\right) \\
& = \theta^{-1}\left(\theta(a,b)+\theta(a_1,b_1)\right) \\
& = \theta^{-1}\left(a+s(b)+a_1+s(b_1)\right) \\
& = \left(a+(s(b)+a_1-s(b)),b+b_1\right)
\end{align*}
for all $a,a_1\in G$ and $b,b_1\in H$.  Moreover  $\theta$ is an isomorphism of  group-groupoids.  Hence $G\times H$ becomes a group-groupoid inherited by $H$. We call the group-groupoid  $  G\times   H$ obtained  {\it semidirect product} of group-groupoids $ G$ and $H$ and  denote by $  G\rtimes   H$. So we have a split extension
\[\xymatrix{
	\mathcal{E}_{  G\rtimes   H} : & \bm{\mathsf{1}} \ar[r] &   G \ar@{->}[r]^-{\iota} &   G\rtimes   H \ar@{->}[r]_-{p} &   H \ar@/_/[l]_-{s} \ar[r] & \bm{\mathsf{1}} }\]
associated with $G\rtimes H$.
\begin{lemma} For a split extension  $\mathcal{E}$ of $H$ by $G$, the group $H$  acts on the group $G$.
\end{lemma}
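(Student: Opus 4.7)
The plan is to mimic Orzech's derived action construction (equation (3.1)) in the group-groupoid setting. For $b\in H$ and $a\in G$, I would define
\[ b\cdot a \;:=\; \iota^{-1}\bigl(s(b)+\iota(a)-s(b)\bigr), \]
viewing $\iota(a)$ inside $K$ and interpreting the sum as the group operation in $K$. The whole proof will rest on (i) checking that this formula makes sense, i.e.\ that the bracketed element really lies in $\iota(G)$, and (ii) verifying the action axioms.

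For (i), I would apply the morphism $p$ to the bracketed expression. Because $p$ is a morphism of groups, exactness gives $p\iota(a)=0$, and the splitting gives $ps(b)=b$, so
\[ p\bigl(s(b)+\iota(a)-s(b)\bigr) \;=\; b+0-b \;=\; 0. \]
Hence the element lies in $\Ker p=\Im\iota$, and since $\iota$ is injective as a kernel inclusion, $\iota^{-1}$ picks out a unique element of $G$. In categorical terms this is just the observation that $\iota(G)$ is normal in $K$, so conjugation by $s(b)$ preserves it.

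For (ii), the identity axiom $0\cdot a=a$ follows from $s(0)=0$ (true because $s$ is a morphism of group-groupoids, hence in particular of groups). Compatibility with addition in $H$ follows by expanding
\begin{align*}
(b_1+b_2)\cdot a &= \iota^{-1}\bigl(s(b_1+b_2)+\iota(a)-s(b_1+b_2)\bigr) \\
&= \iota^{-1}\bigl(s(b_1)+s(b_2)+\iota(a)-s(b_2)-s(b_1)\bigr) \\
&= b_1\cdot(b_2\cdot a),
\end{align*}
where the second equality uses that $s$ is a group morphism, and the last uses injectivity of $\iota$ to cancel an $\iota\iota^{-1}$.

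There is really no substantive obstacle here: the only point of care is keeping track of what lives in $G$ versus $K$ and suppressing $\iota$ consistently, which is bookkeeping once the normality computation in (i) is in place. The lemma is essentially the pointwise observation that a split short exact sequence of groups (here the sequences $\mathcal{E}_1$ and $\mathcal{E}_0$ extracted from $\mathcal{E}$) always gives a conjugation action of the quotient on the kernel; the group-groupoid structure plays no extra role at this stage and will enter only when one upgrades the action to one respecting the groupoid maps $d_0,d_1,\varepsilon$ in subsequent results.
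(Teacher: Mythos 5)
Your proposal is correct and follows exactly the paper's approach: the paper simply declares the conjugation action $b\cdot a = s(b)+a-s(b)$ (identifying $G$ with its image under $\iota$ in $K$), and you supply the same formula together with the routine well-definedness check via $\Ker p=\Im\iota$ and the action axioms, which the paper leaves implicit. No discrepancy to report.
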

\begin{Prf} The action of $H$ on $G$  is defined by \[b\cdot a=s(b)+a-s(b)\]
for $a\in G, b\in H$.\end{Prf}

We call  such an action  {\em derived action} of group-groupoids.

\begin{remark} \label{remark} If there exists a derived action of  $H$  on $G$, then for  $a,a_1\in G$ and  $b,b_1\in H$ the following hold.
\begin{enumerate}
 \item $d_0(b\cdot a)=d_0(b)\cdot d_0(a)$;
 \item  $d_1(b\cdot a)=d_1(b)\cdot d_1(a)$;
 \item\label{rem3}  $1_y\cdot 1_x=1_{y\cdot x}$ for $x\in G_0$ and  $y\in H_0$;
 \item $(b\cdot a)^{-1}=b^{-1}\cdot a^{-1}$;
 \item \label{interchang} $(b\circ b_1)\cdot (a\circ a_1)=(b\cdot a)\circ(b_1\cdot a_1)$ whenever one side makes sense.
   \end{enumerate}
 \end{remark}

\begin{lemma} For group-groupoids $G$ and $H$, if the group $H$ acts on $G$, then $H_0$ also acts on $G_0$ by an action \[y\cdot x=d_0(\varepsilon(y\cdot x))=d_0(\varepsilon(y)\cdot\varepsilon(x))\]
for all $y\in H_0$ and $x\in G_0$. 
\end{lemma}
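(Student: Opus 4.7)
The plan is to show that the formula $y\cdot x := d_0(\varepsilon(y)\cdot\varepsilon(x))$ takes values in $G_0$ and defines a group action of $H_0$ on $G_0$; once this is established, the equality $d_0(\varepsilon(y\cdot x))=y\cdot x$ appearing in the statement follows for free from $d_0\varepsilon = 1_{G_0}$.

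The first step is to verify that $\varepsilon(y)\cdot\varepsilon(x)$ is actually an identity arrow of $G$, so that applying $d_0$ to it produces a genuine object. Using the derived-action formula from the preceding lemma, $\varepsilon(y)\cdot\varepsilon(x)=s(\varepsilon(y))+\varepsilon(x)-s(\varepsilon(y))$, and since $s\colon H\to K$ is a morphism of group-groupoids it commutes with $\varepsilon$, giving $s(\varepsilon(y))=\varepsilon(s_0(y))$ where $s_0:=s|_{H_0}$. Because $\varepsilon\colon K_0\to K$ is a group homomorphism, the identity arrows form a subgroup of $K$, whence
\[\varepsilon(y)\cdot\varepsilon(x) = \varepsilon\bigl(s_0(y)+x-s_0(y)\bigr),\]
which is an identity arrow whose common source and target $s_0(y)+x-s_0(y)$ lies in $G_0$, since $G$ is a normal subgroup-groupoid of $K$.

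The second step is to check the two group-action axioms by direct unpacking, using that the derived action of $H$ on $G$ is itself a group action. For the neutral element, $\varepsilon(0_{H_0})=0_H$ acts trivially, so $0_{H_0}\cdot x = d_0(0_H\cdot\varepsilon(x)) = d_0(\varepsilon(x)) = x$. For compatibility,
\[(y+y_1)\cdot x = d_0\bigl((\varepsilon(y)+\varepsilon(y_1))\cdot\varepsilon(x)\bigr) = d_0\bigl(\varepsilon(y)\cdot(\varepsilon(y_1)\cdot\varepsilon(x))\bigr) = d_0\bigl(\varepsilon(y)\cdot\varepsilon(y_1\cdot x)\bigr) = y\cdot(y_1\cdot x),\]
where the three equalities use, respectively, that $\varepsilon$ is a group morphism, that $H$ acts as a group on $G$, and the first-step identification $\varepsilon(y_1)\cdot\varepsilon(x)=\varepsilon(y_1\cdot x)$.

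I do not expect any real obstacle; the only subtlety worth flagging is the first step, which relies on the fact that the derived action sends identity arrows to identity arrows, and this in turn is a direct consequence of the splitting $s$ being a morphism of \emph{group-groupoids} (so that it intertwines the $\varepsilon$'s) rather than a mere group homomorphism.
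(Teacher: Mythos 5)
Your proof is correct and takes essentially the same route as the paper: the paper's entire proof is a citation of item (3) of the preceding Remark, namely $1_y\cdot 1_x=1_{y\cdot x}$, which is exactly the key fact you establish in your first step (that the derived action sends identity arrows to identity arrows) before verifying the action axioms. The only difference is that you re-derive that identity explicitly from the split-extension formula and the compatibility of $s$ with $\varepsilon$, rather than quoting it.
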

\begin{proof}
It is a consequence of Remark \ref{remark} (\ref{rem3}).
\end{proof}

Hence we can define  group-groupoid action as follows:

\begin{definition}
	Let $  G$ and $  H$ be two group-groupoids. If there is a group action of $H$ on $G$, then  we say that $  H$ acts on $  G$.
\end{definition}

\begin{lemma}
An action of $  H$ on $  G$ is a derived action if and only if $ G\rtimes   H$ is a group-groupoid with the group addition given by
\begin{align*}
(a,b)+(a_1,b_1)&= \left(a+b\cdot a_1,b+b_1\right)
\end{align*}
for all $a,a_1\in G$, $b,b_1\in H$. 
\end{lemma}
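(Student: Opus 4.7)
For the forward implication, I would appeal directly to the calculation already carried out in the paragraphs immediately before the statement. If the action of $H$ on $G$ is derived from some split extension $\mathcal{E}$ with section $s$, then the bijection $\theta(a,b)=a+s(b)$ transports the group--groupoid structure of $K$ onto $G\times H$, and rewriting the transported addition using $b\cdot a_1=s(b)+a_1-s(b)$ yields the asserted formula $(a,b)+(a_1,b_1)=(a+b\cdot a_1,b+b_1)$.

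For the converse, the plan is to manufacture a split extension out of $G\rtimes H$ itself whose derived action recovers the given one. Concretely, I would set
\[\iota\colon G\to G\rtimes H,\ a\mapsto(a,0),\qquad p\colon G\rtimes H\to H,\ (a,b)\mapsto b,\qquad s\colon H\to G\rtimes H,\ b\mapsto(0,b).\]
From the formula for the addition, $\iota$ is additive (since $0\cdot a_1=a_1$ gives $(a,0)+(a_1,0)=(a+a_1,0)$), $p$ is clearly additive, $ps=1_H$, and $\Ker p=\iota(G)$; since the groupoid data on $G\rtimes H$ is componentwise, $\iota$, $p$ and $s$ are simultaneously morphisms of groupoids. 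This yields a split extension
\[\xymatrix{\bm{\mathsf{1}}\ar[r]&G\ar[r]^-{\iota}&G\rtimes H\ar[r]_-{p}&H\ar@/_/[l]_-{s}\ar[r]&\bm{\mathsf{1}}.}\]

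Computing its derived action then gives
\[s(b)+\iota(a)-s(b)=(0,b)+(a,0)+(0,-b)=(b\cdot a,b)+(0,-b)=(b\cdot a+b\cdot 0,\,b-b)=(b\cdot a,0)=\iota(b\cdot a),\]
using that $-(0,b)=(0,-b)$ because $b\cdot 0=0$ for any action by automorphisms. After identifying $\iota(G)$ with $G$ this is precisely the original action, so the action is derived.

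The main obstacle I anticipate is the bookkeeping in the converse: one must check that $\iota$, $p$ and $s$ respect the group and the groupoid structures at the same time, that $\Ker p$ coincides with $\iota(G)$ as a sub-group--groupoid (so the sequence is genuinely short exact in $\GpGd$), and that the inverse in $G\rtimes H$ has the expected form along the image of $s$. These are routine consequences of the compatibilities collected in Remark~\ref{remark} combined with the fact that a group action by automorphisms fixes~$0$.
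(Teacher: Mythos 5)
Your proposal is correct and follows essentially the route the paper intends: the forward direction is exactly the transport-of-structure computation via $\theta(a,b)=a+s(b)$ carried out just before the lemma, and your converse (building the evident split extension $G\to G\rtimes H\to H$ with $\iota(a)=(a,0)$, $p(a,b)=b$, $s(b)=(0,b)$ and checking that its derived action recovers $b\cdot a$) is the standard argument the paper leaves implicit, since it states the lemma without proof. The bookkeeping points you flag, in particular $b\cdot 0=0$ and $0\cdot a_1=a_1$, are indeed all that is needed and they hold for any group action by automorphisms.
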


As an assessment of  Remark \ref{remark} (\ref{interchang}) the proof of the following lemma is immediate.

\begin{lemma}\label{indact} For  a derived action of $  H$ on $  G$, the following hold:

\begin{enumerate}

\item $\Ker d_{0}^{  H}$ acts on $\Ker d_{1}^{  G}$  by $b\cdot a_1=a_1$  for  $a_1\in \Ker d_{1}^{  G}$ and
$b\in \Ker d_{0}^{  H}$.
\item $\Ker d_{1}^{  H}$  acts on $\Ker d_{0}^{  G}$ by $b_1\cdot a=a$ for $a\in \Ker d_{0}^{  G}$   and $b_1\in \Ker d_{1}^{  H}$.
\item  $b\cdot a = \varepsilon^{  H}d_{1}^{  H}(b)\cdot a $ and $ b\cdot a = b\cdot \varepsilon^{G}d_{1}^{  G}(a)-\varepsilon^{G}d_{1}^{  G}(a)+a$.
\end{enumerate}

\end{lemma}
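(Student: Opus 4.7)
The engine of the proof is the interchange law from Remark \ref{remark}(\ref{interchang}),
\[(b\circ b_1)\cdot(a\circ a_1) \;=\; (b\cdot a)\circ(b_1\cdot a_1),\]
together with two elementary consequences of the derived-action formula $b\cdot a = s(b)+a-s(b)$: namely $b\cdot 0_G = 0_G$ and $0_H\cdot a = a$ (the latter using that $s$ is a group morphism, so $s(0_H) = 0_K$). All three parts follow by rewriting the elements involved as trivial groupoid compositions with identity or zero arrows and then applying interchange.

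For Item 1 I would use the hypotheses $d_0(b) = 0$ and $d_1(a_1) = 0$ to write $b = b\circ 0_H$ in $H$ and $a_1 = 0_G\circ a_1$ in $G$; both compositions are well-defined precisely by these hypotheses. Interchange then gives
\[b\cdot a_1 \;=\; (b\circ 0_H)\cdot(0_G\circ a_1) \;=\; (b\cdot 0_G)\circ(0_H\cdot a_1) \;=\; 0_G\circ a_1 \;=\; a_1.\]
Item 2 is mirror-symmetric: the decompositions to use are $b_1 = 0_H\circ b_1$ and $a = a\circ 0_G$ (legitimate because $d_1(b_1) = 0$ and $d_0(a) = 0$), and the same short computation yields $b_1\cdot a = a$.

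For Item 3 I would follow the same template, but with the identity arrows $\varepsilon d_1(b)$ and $\varepsilon d_0(a)$ in place of the zero arrows. For the first equality, decomposing $b = \varepsilon d_1(b)\circ b$ and $a = a\circ \varepsilon d_0(a)$ as trivial identity-compositions and applying interchange produces $b\cdot a = (\varepsilon d_1(b)\cdot a)\circ(b\cdot \varepsilon d_0(a))$; the extra factor collapses to an identity arrow in $G$ by Remark \ref{remark}(\ref{rem3}), so the $\circ$-composition reduces to $\varepsilon d_1(b)\cdot a$. For the second equality, I would split $a = \varepsilon d_1(a) + \bigl(a - \varepsilon d_1(a)\bigr)$ in the group $G$, noting that the second summand lies in $\Ker d_1^G$; using additivity of the action in its second argument and Item 1 to see that $b$ moves the kernel part trivially then yields the claimed formula.

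The main obstacle I anticipate is in Item 3: one must match source/target conditions carefully so that every groupoid composition written down is actually well-defined, and one must verify that the auxiliary identity-terms produced by interchange really do collapse as claimed. Once the right decompositions are in hand both equalities of Item 3 become immediate corollaries of the interchange law, in line with the author's declaration that the lemma follows directly from Remark \ref{remark}(\ref{interchang}).
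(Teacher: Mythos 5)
Your treatment of Items 1 and 2 is correct and is exactly what the authors intend: the paper offers no written proof (it simply declares the lemma immediate from Remark \ref{remark}(\ref{interchang})), and your decompositions $b=b\circ 0_H$, $a_1=0_G\circ a_1$ (resp.\ $b_1=0_H\circ b_1$, $a=a\circ 0_G$), combined with $b\cdot 0_G=0_G$ and $0_H\cdot a=a$ and the interchange law, fill in those details faithfully.

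Item 3 is where there is a genuine gap. Your interchange computation correctly yields $b\cdot a=(\varepsilon^{H}d_{1}^{H}(b)\cdot a)\circ(b\cdot\varepsilon^{G}d_{0}^{G}(a))$, but the step ``the extra factor collapses to an identity arrow by Remark \ref{remark}(\ref{rem3})'' does not go through: that item only covers $1_y\cdot 1_x$, and here $b$ is not an identity arrow. By Remark \ref{remark}(1)--(2), the element $b\cdot\varepsilon^{G}d_{0}^{G}(a)$ is an arrow from $d_0(b)\cdot d_0(a)$ to $d_1(b)\cdot d_0(a)$, which is an identity only in special cases (e.g.\ when $d_0(a)=0$). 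The failure is not merely presentational: for the conjugation action of the pair group-groupoid $G=A\times A$ on itself with $A$ nonabelian, $b\cdot a=b+a-b$ genuinely differs from $\varepsilon^{G} d_1^{G}(b)\cdot a$ (take $A=S_3$, $b=((12),e)$, $a=((13),e)$: the left side is $((23),e)$, the right side $((13),e)$), so no argument can close this gap without an extra hypothesis such as $a\in\Ker d_0^{G}$. The second equality has the analogous problem: your appeal to Item 1 to conclude that $b$ fixes the kernel part requires $b\in\Ker d_0^{H}$, which you have not assumed and which the statement does not supply; moreover the summand to isolate is $-\varepsilon^{G}d_1^{G}(a)+a\in\Ker d_1^{G}$ rather than $a-\varepsilon^{G}d_1^{G}(a)$ if the result is to come out in the stated order. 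In short, Items 1 and 2 are proved by the paper's intended route, while Item 3 as literally stated cannot be obtained by your argument (or any other) without restricting $a$, respectively $b$, to the appropriate kernels; the honest conclusion is that the obstacle you anticipated is real and should be reported rather than asserted away.
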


\begin{example} For any group-groupoid $G$, the conjugation action of $  G$ on itself given by
$a\cdot a_1=a+a_1-a $ for all $a,a_1\in G$ is a derived action. Hence there is a split extension of $  G$ by $  G$
\[\xymatrix{
\mathcal{E}_{  G} : & \bm{\mathsf{1}} \ar[r] &   G \ar@{->}[r]^-{\iota} &   G\rtimes   G \ar@{->}[r]_-{p} &   G \ar@/_/[l]_-{s} \ar[r] & \bm{\mathsf{1}} }\]
 associated with  the conjugation action, where $\iota(a)=(a,0)$, $p(a,a_1)=a_1$, $s(a)=(0,a)$  for all $a,a_1\in G$.
 \end{example}

\begin{definition} Let
\[\xymatrix{
	\mathcal{E} : & \bm{\mathsf{1}} \ar[r] &   G \ar@{->}[r]^{\iota} &   K \ar@{->}[r]_{p} &   H \ar@/_/[l]_{s} \ar[r] & \bm{\mathsf{1}} }\]
and
\[\xymatrix{
	\mathcal{E}' : & \bm{\mathsf{1}} \ar[r] &   G' \ar@{->}[r]^{\iota} &   K' \ar@{->}[r]_{p} &   H' \ar@/_/[l]_{s} \ar[r] & \bm{\mathsf{1}} }\]
be two split extensions of group-groupoids. We define a morphism $(\alpha,\beta,\gamma)\colon\mathcal{E}\rightarrow \mathcal{E}'$ of split extensions to be consisting of  morphisms $\alpha\colon   G\rightarrow   G'$, $\beta\colon   K\rightarrow   K'$ and $\gamma\colon   H\rightarrow   H'$ such that the following diagram commutes.
\[\xymatrix{
	\mathcal{E} : \ar@<-.8ex>[d]_{(\alpha,\beta,\gamma)} & \bm{\mathsf{1}} \ar[r] &   G \ar@{->}[r]^{\iota}\ar@{->}[d]_{\alpha}  &   K \ar@{->}[r]_{p} \ar@{->}[d]_{\beta} &   H \ar@/_/[l]_{s} \ar[r] \ar@{->}[d]^{\gamma} & \bm{\mathsf{1}} \\
	\mathcal{E}' : & \bm{\mathsf{1}} \ar[r] &   G' \ar@{->}[r]^{\iota'} &   K' \ar@{->}[r]_{p'} &   H' \ar@/_/[l]_{s'} \ar[r] & \bm{\mathsf{1}} }\]
\end{definition}

\begin{remark} If there is a derived action of $H$ on $G$, then the morphism  $(1_{  G},\theta,1_{  H})\colon \mathcal{E}_{  G\rtimes  H}\rightarrow \mathcal{E}$ of  split extensions denoted below is an isomorphism of split extensions.

\[\xymatrix{
	\mathcal{E}_{  G\rtimes   H} : \ar@<-.8ex>[d]_{(1_{  G},\theta,1_{  H})} & \bm{\mathsf{1}} \ar[r] &   G \ar@{->}[r]^-{\iota} \ar@{->}[d]_{1_G} &   G\rtimes   H \ar@{->}[r]_-{p} \ar@{->}[d]_{\theta} &   H \ar@{->}[d]^{1_{  H}} \ar@/_/[l]_-{s} \ar[r] & \bm{\mathsf{1}}\\
	\mathcal{E} :  & \bm{\mathsf{1}} \ar[r] &   G \ar@{->}[r]^-{\iota}  &   K \ar@{->}[r]_-{p}  &   H \ar@/_/[l]_-{s} \ar[r]  & \bm{\mathsf{1}} }\]
\end{remark}

Following the idea in \cite{Por} we now define crossed module of group-groupoids  as follows:
\begin{definition}Let $  G$ and $  H$ be two group-groupoids with an action of $  H$ on $  G$. We call a morphism $\partial\colon   G\rightarrow   H$ of group-groupoids  {\it crossed module} of them whenever  \[(1_{  G},1_{  G}\times\partial, \partial)\colon \mathcal{E}_{  G}\rightarrow \mathcal{E}_{  G\rtimes  H} \text{\ \ \ \ and\ \ \ \ } (\partial, \partial\times 1_{  H},1_{  H})\colon \mathcal{E}_{  G\rtimes  H}\rightarrow \mathcal{E}_{  H}\]
denoted below are morphisms of split extensions.
\[\xymatrix{
	\mathcal{E}_{  G} : \ar@<-.8ex>[d]_{(1_{  G},1_{  G}\times\partial, \partial)} & \bm{\mathsf{1}} \ar[r] &   G \ar@{->}[r]^-{\iota}\ar@{->}[d]_{1_{  G}}  &   G\rtimes   G \ar@{->}[r]_-{p} \ar@{->}[d]_{1_{  G}\times\partial} &   G \ar@/_/[l]_-{s} \ar[r] \ar@{->}[d]^{\partial} & \bm{\mathsf{1}} \\
	\mathcal{E}_{  G\rtimes   H} : \ar@<-.8ex>[d]_{(\partial, \partial\times 1_{  H},1_{  H})} & \bm{\mathsf{1}} \ar[r] &   G \ar@{->}[r]^-{\iota} \ar@{->}[d]_{\partial} &   G\rtimes   H \ar@{->}[r]_-{p} \ar@{->}[d]_{\partial\times 1_{  H}} &   H \ar@{->}[d]^{1_{  H}} \ar@/_/[l]_-{s} \ar[r] & \bm{\mathsf{1}}\\
	\mathcal{E}_{  H} :  & \bm{\mathsf{1}} \ar[r] &   H \ar@{->}[r]^-{\iota}  &   H\rtimes   H \ar@{->}[r]_-{p}  &   H \ar@/_/[l]_-{s} \ar[r]  & \bm{\mathsf{1}} }\]
\end{definition}

We write $(G,H,\partial)$ for such a  crossed module. By the assessment of  the above morphisms of split extensions we can state the crossed module over group-groupoids as follows:

\begin{proposition} Let $\partial\colon G\rightarrow H$ be a morphism of group-groupoids such that  $H$ acts on $  G$. Then $(G,  H,\partial)$ is a crossed module over group-groupoids if and only  if $(G,  H,\partial_1)$  is a crossed module over groups.\end{proposition}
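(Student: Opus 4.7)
The plan is to exploit the fact that the forgetful functor from group-groupoids to groups (taking the arrow group) is faithful: a morphism of group-groupoids is completely determined by its restriction to the arrow group, since the structural maps $d_0, d_1, \varepsilon$ are built into the data and preserved by any such morphism. Consequently, two parallel group-groupoid morphisms agree iff they agree as group homomorphisms on arrows, and the commutativity of any diagram of group-groupoid morphisms is decided entirely at the arrow-group level. Applied to the three split extensions $\mathcal{E}_G$, $\mathcal{E}_{G\rtimes H}$, $\mathcal{E}_H$ appearing in the definition, their arrow-group projections are ordinary group split extensions of the type $\mathcal{E}_1$ displayed earlier in Section 3.

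For the direction $(\Rightarrow)$, I restrict the two defining diagrams of morphisms of split extensions to arrows, obtaining morphisms $(1_G, 1_G \times \partial_1, \partial_1)$ and $(\partial_1, \partial_1 \times 1_H, 1_H)$ of split extensions of ordinary groups that commute. These are precisely Porter's diagrams in \cite{Por} (specialized to $\C = \Gp$) characterizing $(G, H, \partial_1)$ as a classical Whitehead crossed module of groups. For $(\Leftarrow)$, given that $(G, H, \partial_1)$ is a crossed module of groups, Porter's arrow-level diagrams commute. Every map in the corresponding group-groupoid diagrams is already a group-groupoid morphism: $\partial$ is functorial by hypothesis, identities are trivially so, $G \rtimes H$ is a group-groupoid by the lemma preceding the definition (which itself rests on the compatibilities in Remark \ref{remark}), and $\iota, p, s$ of each semidirect splitting are manifestly functorial. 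Hence the candidate diagrams live in the category of group-groupoids, and the faithfulness observation promotes arrow-level commutativity to the required commutativity of group-groupoid morphisms.

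The main point to verify, rather than a genuine obstacle, is that no independent object-level condition is left hanging. Because each group-groupoid morphism restricts to a morphism on objects via $\varepsilon$, the object-level projection of each diagram is forced by its arrow-level counterpart; items (1)--(3) of Remark \ref{remark} then guarantee that the induced $H_0$-action on $G_0$ and the induced object-level morphism $\partial_0$ behave in the expected way. Thus the group-groupoid definition carries no data beyond what $\partial_1$ already encodes as a crossed module of groups, and the bi-implication follows.
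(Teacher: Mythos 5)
Your proof is correct and matches the paper's intent: the paper gives no explicit proof of this proposition, asserting only that it follows ``by the assessment of the above morphisms of split extensions,'' and your argument carries out exactly that assessment --- faithfulness of the arrow-group forgetful functor reduces everything to the group-level split-extension diagrams (Porter's characterization of Whitehead crossed modules), while functoriality of $1_{G}\times\partial$ and $\partial\times 1_{H}$ and the object-level homomorphism conditions come for free from the hypothesis that $\partial$ is a morphism of group-groupoids together with the compatibility of the derived action with $\varepsilon$ and $d_0$. No gap.
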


We should point out that  in a crossed module $(G,  H,\partial)$ over group-groupoids,  $\partial_0\colon G_0\rightarrow H_0$ is also a crossed  module of groups.

\begin{lemma}
	Let $(  G,  H,\partial)$ be a crossed module over group-groupoids. Then  $H_0$ acts  on $G$ by \[y\cdot a=\varepsilon^{  H}(y)\cdot a=1_y\cdot a\] for $a\in G, y\in H_0$  and $H$ acts on $G_0$ by  \[b\cdot x=d_{1}^{  H}(b)\cdot x\] for all  $b\in H, x\in G_0$ .
\end{lemma}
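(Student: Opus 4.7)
The plan is to recognise each of the two claimed actions as the pullback of a previously established group action along a group homomorphism supplied by the group-groupoid structure, so that the group-action axioms are inherited essentially for free. The only structural input beyond what is already in play is that in any group-groupoid both the object-inclusion $\varepsilon^{H}\colon H_0\to H$ and the final-point map $d_1^{H}\colon H\to H_0$ are morphisms of groups.

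First I would handle the action of $H_0$ on $G$. Setting $y\cdot a:=\varepsilon^{H}(y)\cdot a=1_y\cdot a$ and using $\varepsilon^{H}(0)=0$ together with $\varepsilon^{H}(y+y_1)=1_y+1_{y_1}$, the axioms $0\cdot a=a$ and $(y+y_1)\cdot a=y\cdot(y_1\cdot a)$ reduce immediately to the corresponding axioms for the given action of the group $H$ on the group $G$ packaged in the crossed module $(G,H,\partial)$. Compatibility with the additive structure on $G$, namely $y\cdot(a+a_1)=y\cdot a+y\cdot a_1$, is inherited in exactly the same way.

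For the action of $H$ on $G_0$, by the earlier lemma we already have a group action of $H_0$ on $G_0$ given by $y\cdot x=d_0^{H}(\varepsilon(y)\cdot\varepsilon(x))$. Setting $b\cdot x:=d_1^{H}(b)\cdot x$ and using that $d_1^{H}$ is a group homomorphism with $d_1^{H}(0)=0$ and $d_1^{H}(b+b_1)=d_1^{H}(b)+d_1^{H}(b_1)$, the identity, composition, and additivity axioms again descend from the $H_0$-action on $G_0$. One only needs to check that $d_1^{H}(b)\cdot x$ lies in $G_0$, which is automatic from the codomain of the $H_0$-action.

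The main obstacle, such as it is, is purely bookkeeping: one must keep track of \emph{which} group operation (the arrow-level operation in $H$, or the object-level operation in $H_0$) is being invoked at each step, and cite the correct homomorphism property of $\varepsilon^{H}$ or $d_1^{H}$ coming from the definition of group-groupoid. There is no substantive difficulty; the entire statement is a routine transport-of-structure along the canonical group maps $\varepsilon^{H}$ and $d_1^{H}$.
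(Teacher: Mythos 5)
Your proposal is correct: both claimed actions are indeed just the pullbacks of already-established actions (of $H$ on $G$, and of $H_0$ on $G_0$) along the group homomorphisms $\varepsilon^{H}$ and $d_1^{H}$, and the derived-action axioms transport along group homomorphisms exactly as you say. The paper omits this proof entirely (``easy calculations''), and your transport-of-structure argument is the natural way to fill in those details, so there is nothing to contrast.
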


\begin{proof}
These can be seen by easy calculations. So proof is omitted.
\end{proof}

We now give the following examples of crossed modules for group-groupoids:

\begin{example}
Let $  G$ be a group-groupoid and $  N$ a normal subgroup-groupoid of $  G$ in the sense of \cite{Mu-Sa-Al}, i.e., $H$ is a normal subgroup of $N$ and hence $N_0$ is a subgroup of $G_0$. Then $(  N,  G,\inc)$ is a crossed module over group-groupoids where $\inc\colon   N\hookrightarrow  G$ is the inclusion functor and the action of $G$ on $N$ is conjugation. In particularly $(  G,  G,1_{  G})$ and $(\bm 1,   G,0)$ are crossed modules over group-groupoids.
\end{example}

\begin{example}
Let $(A,B,\partial)$ be a crossed module over groups. Then $A\rtimes A$ and $B\rtimes B$ are group-groupoids respectively on $A$ and $B$; and then  $(A\rtimes A, B\rtimes B, \alpha\times\alpha)$ becomes  a crossed module over group-groupoids.
\end{example}

To give a geometric example of crossed modules over group-groupoids we first recall that by a {\em topological crossed module} we a mean  a crossed module $(A,B,\partial)$ in which $A$ and $B$ are topological groups, the action of  $B$ on $A$ is continuous and $\partial$ is a continuous morphism of topological groups.
\begin{example}
It is known from \cite{BS1} that  if $X$ is a topological group,  then the fundamental groupoid $\pi X$ is a group-groupoid . Therefore if $(A,B,\partial)$ is a   topological crossed module, then  $(\pi A,\pi B,\pi(\partial))$ becomes a crossed module of  group-groupoids.
\end{example}

\begin{example}
A group can be thought as a discrete group-groupoid in which the arrows are only identities.  Hence every crossed module over groups $(A,B,\partial)$ can be considered as a crossed module over group-groupoids.
\end{example}

\begin{example}
For a group $A$, the  direct product $G=A\times A$ becomes a group-groupoid  on $A$ with $d_0(a,b)=a$, $d_1(a,b)=b$, $\varepsilon(a)=(a,a)$, $n(a,b)=(b,a)$ and $(b,c)\circ(a,b)=(a,c)$. Hence a crossed module over groups $(A,B,\partial)$ gives rise to a crossed module over group-groupoids replacing $A$ and $B$ with the associated group-groupoids.
\end{example}

A morphism  $(f,g)\colon (G,  H,\partial)\rightarrow (  G',  H',\partial')$ between crossed modules over group-groupoids is defined to be a pair of group-groupoid morphisms $f\colon  G\rightarrow  G'$ and $g\colon  H\rightarrow  H'$ with the property that  $(f,g)\colon(G,H,\partial)\rightarrow(G',H',\partial')$ is a morphisms of crossed modules over groups.  We  write $\XMod(\GpGd)$ for the category of crossed modules over group-groupoids and morphisms between them to be arrows.

\section{Crossed modules and double group-groupoids}

In this section we define a double group-groupoid to be an internal groupoid in the category of group-groupoids; and then prove that these are categorically equivalent to associated crossed modules.




If $G$ is an internal groupoid in the category of group-groupoids, then the following structural groupoid maps are morphisms of group-groupoids provided $G_0=H$
 \[\xymatrix{
	 G {_{d_{0}}\times_{d_{1}}}  G \ar[r]^-m &  G \ar@(dl,dr)[]|{ n } \ar@<.7ex>[r]^-{d_{0}} \ar@<-.4ex>[r]_-{d_{1}}  &  H  \ar@(u,u)[l]_-{\varepsilon}}\]
Here we have four different but compatible group-groupoid structures
$(G,G_0)$, $(H,H_0)$, $(G,H)$ and $(G_0,H_0)$.

\begin{center}
	\begin{tabular}{m{0.05\linewidth}m{0.15\linewidth}}
		\centering $\mathcal{G}$ : &
		\centering $\xymatrix @=4pc {
			G \ar@<1ex>[r]^{d_{0}} \ar@<-1ex>[r]_{d_{1}} \ar @<1ex>[d]^{d_1}  \ar@<-1ex>[d]_{d_0} & H   \ar[l]|{\varepsilon_1}  \ar @<1ex>[d]^{d_1} \ar @<-1ex> [d]_{d_0} \\
			G_0 \ar[u]|{\varepsilon}  \ar@<1ex>[r]^{d_{0}} \ar@<-1ex>[r]_{d_{1}} & H_0 \ar[l]|{\varepsilon_0} \ar[u]|{\varepsilon}}$
	\end{tabular}
\end{center}

 Hence  we define a {\em double group-groupoid} to be consisting of four different, but  compatible, group-groupoids $(S,H)$, $(S,V)$, $(H,P)$ and $(V,P)$ such that the following diagram of group-groupoids commutes

\begin{center}
\begin{tabular}{m{0.05\linewidth}m{0.15\linewidth}}
\centering $\mathcal{G}$ : &
\centering $\xymatrix @=4pc {
S \ar@<1ex>[r]^{d_{0}^{h}} \ar@<-1ex>[r]_{d_{1}^{h}} \ar @<1ex>[d]^{d_1^{v}}  \ar@<-1ex>[d]_{d_0^{v}} & H   \ar[l]|{\varepsilon^h}  \ar @<1ex>[d]^{d_1^{{H}}} \ar @<-1ex> [d]_{d_0^{ H}} \\
V \ar[u]|{\varepsilon^{v}}  \ar@<1ex>[r]^{d_{0}^{V}} \ar@<-1ex>[r]_{d_{1}^{V}} & P \ar[l]|{\varepsilon^V} \ar[u]|{\varepsilon^{ H}}}$
\end{tabular}
\end{center}

Horizontal and vertical compositions together with group operations  have the following interchange laws:
\begin{align*}
(\beta\circ_{v}\alpha)\circ_{h}(\beta_1\circ_{v}\alpha_1)&= (\beta\circ_{h}\beta_1)\circ_{v}(\alpha\circ_{h}\alpha_1)\\
(\beta\circ_{v}\alpha)+(\beta_1\circ_{v}\alpha_1)&= (\beta+\beta_1)\circ_{v}(\alpha+\alpha_1) \ \ \tag{4.1}\label{inthv}\\
(\beta\circ_{h}\alpha)+(\beta_1\circ_{h}\alpha_1)&= (\beta+\beta_1)\circ_{h}(\alpha+\alpha_1)
\end{align*}
whenever one side of the equations make sense.

We can now give the following examples of double group-groupoids:
\begin{example} If $  G$ is a group-groupoid, then $\mathcal{G}=(G,G,G_0,G_0)$ is a double group-groupoid with the trivial structural maps
\begin{center}
	\begin{tabular}{m{0.05\linewidth}m{0.15\linewidth}}
		\centering $\mathcal{G}$ : &
		\centering $\xymatrix @=4pc {
			G \ar@<1ex>[r]^{1} \ar@<-1ex>[r]_{1} \ar @<1.2ex>[d]^{d_1^{G}}  \ar@<-1.2ex>[d]_{d_0^{G}} & G   \ar[l]|{1}  \ar @<1.2ex>[d]^{d_1^{{G}}} \ar @<-1.2ex> [d]_{d_0^{G}} \\
			G_0\ar[u]|{\varepsilon^{G}}  \ar@<1ex>[r]^{1} \ar@<-1ex>[r]_{1} & G_0 \ar[l]|{1} \ar[u]|{\varepsilon^{G}}}$
	\end{tabular}
\end{center}
\end{example}

\begin{example}
	Let $(A,B,\partial)$ be a topological crossed module. Then $\pi(A,B,\partial)=(\pi(A\rtimes B),\pi(B),A\rtimes B,B)$ becomes a double group-groupoid
\begin{center}
	\begin{tabular}{m{0.05\linewidth}m{0.15\linewidth}}
		\centering $\mathcal{G}$ : &
		\centering $\xymatrix @=4pc {
			\pi(A\rtimes B) \ar@<1ex>[r]^{d_{0}^{h}} \ar@<-1ex>[r]_{d_{1}^{h}} \ar @<1ex>[d]^{d_1^{v}}  \ar@<-1ex>[d]_{d_0^{v}} & \pi(B) \ar[l]|{\varepsilon^h}  \ar @<1ex>[d]^{d_1^{{H}}} \ar @<-1ex> [d]_{d_0^{ H}} \\
			A\rtimes B \ar[u]|{\varepsilon^{v}}  \ar@<1ex>[r]^{d_{0}^{V}} \ar@<-1ex>[r]_{d_{1}^{V}} & B \ar[l]|{\varepsilon^V} \ar[u]|{\varepsilon^{ H}}}$
	\end{tabular}
\end{center}
\end{example}

\begin{example} \label{exampledgpd} Let  $(G, H,\partial)$ be a crossed module over group-groupoids. Then we have a double group-groupoid as follows
\[\xymatrix @=5pc {
			G\rtimes H \ar@<1ex>[r]^{d_{0}^{h}} \ar@<-1ex>[r]_{d_{1}^{h}} \ar @<2.5ex>[d]^{d_1^{  G}\times d_1^{  H}}  \ar@<-2.5ex>[d]_{d_0^{  G}\times d_0^{  H}} & H   \ar[l]|{\varepsilon^h}  \ar @<1ex>[d]^{d_1^{{  H}}} \ar @<-1ex> [d]_{d_0^{  H}} \\
			G_0\rtimes H_0 \ar[u]|{\varepsilon^{v}\times \varepsilon^{  H}}  \ar@<1ex>[r]^{d_{0}^{V}} \ar@<-1ex>[r]_{d_{1}^{V}} & H_0 \ar[l]|{\varepsilon^V} \ar[u]|{\varepsilon^{  H}}}\]
where $d_{0}^{h}(a,b)=b$,  $d_{1}^{h}(a,b)=\partial_1(a)+b$, $\varepsilon^{h}(b)=(0,b)$, $m^h((a_1,b_1),(a,b))=(a_1+a,b)$ for $b_1=\partial_1(a)+b$ and $d_{0}^{V}(x,y)=y$, $d_{1}^{V}(x,y)=\partial_0(x)+y$, $\varepsilon^{V}(y)=(0,y)$, $m^V((x_1,y_1),(x,y))=(x_1+x,y)$ for $y_1=\partial_0(x)+y$. A square $(a,b)$ in $G\rtimes H$ is
\[\xymatrix@=.7pc{
	y \ar[dd]_-{(x,y)} \ar[rr]^{b} & & y_1 \ar[dd]^-{(x_1,y_1)} \\
	& (a,b) & \\
	\partial_0(x)+y \ar[rr]_{\partial_1(a)+b}  & & \partial_0(x_1)+y_1 }\]
for $a\in G(x,x_1)$ and $b\in H(y,y_1)$. If $(a,b)$, $(a_1,b_1)$ and $(a',b')$ are squares in $G\rtimes H$ with $a\in G(x,x_1)$, $a_1\in G(x_1,x_2)$, $a'\in G(x',x_1')$, $b\in H(y,y_1)$, $b_1\in H(y_1,y_2)$ and $b'=\partial_1(a)+b$ then
\begin{align*}
(a_1,b_1)\circ_{h}(a,b)&= (a_1\circ a,b_1\circ b),\\
(a',b')\circ_{v}(a,b)&= (a'+a,b).
\end{align*}
\end{example}

In the following proposition, the items (\ref{p1})-(\ref{p4}) are obtained from the fact that $(d_{0}^{h}$, $d_{0}^{V})$, $(d_{1}^{h}$, $d_{1}^{V})$, $(\varepsilon^{h}$, $\varepsilon^{V})$, $(m^h,m^V)$ and $(n^h,n^V)$ are functors and the item (\ref{p5}) is obtained from the fact that $((S,V),(H,P))$ has an internal groupoid structure within groups.

\begin{proposition}
Let $\mathcal{G}=(S,H,V,P)$ be a double group-groupoid. Then for $i,j\in \{0,1\}$ the following are satisfied:
\begin{enumerate}
	\item\label{p1} $d_{i}^{H}d_{j}^{h}=d_{j}^{V}d_{i}^{v}$ , $\varepsilon^{H}d_{i}^{V}=d_{i}^{h} \varepsilon^{v}$ and  $d_{i}^{h} m^{v}=m^{H}(d_{i}^{h}\times d_{i}^{h})$,
	\item\label{p2} $d_{i}^{v}\varepsilon^h=\varepsilon^V d_{i}^{H}$, $\varepsilon^{v}\varepsilon^V=\varepsilon^h\varepsilon^{H}$ and $m^{v}(\varepsilon^h\times\varepsilon^h)=\varepsilon^h m^{H}$,
	\item\label{p3} $d_{i}^{v}m^h=m^V(d_{i}^{v}\times d_{i}^{v})$, $m^h(\varepsilon^{v}\times\varepsilon^{v})=\varepsilon^{v}m^V$ and  $m^{v}(m^h\times m^h)=m^h(m^{v}\times m^{v})$,
	\item\label{p4} $d_{i}^{v}n^v=n^V d_{i}^{v}$, $\varepsilon^{v}n^V=n^v \varepsilon^{v}$ and  $m^{v}(n^v\times n^v)=n^v m^{v}$,
	\item\label{p5} $d_{i}^{h}\varepsilon_{}^{h}=1_{S}$, $d_{i}^{V}\varepsilon_{}^{V}=1_{V}$, $d_{i}^{h}m_{}^{h}=d_{i}^{h}\pi_{(2-i)}$, $d_{i}^{V}m_{}^{V}=d_{i}^{V}\pi_{(2-i)}$, $m^{h}(1_{S}\times m^{h})=m^{h}(m^{h}\times 1_{S})$, $m^{V}(1_{V}\times m^{V})=m^{V}(m^{V}\times 1_{V})$, $m^{h}(\varepsilon^{h} d_{0}^{h},1_{S})=m^{h}(1_{S},\varepsilon^{h} d_{1}^{h})=1_{S}$, $m^{V}(\varepsilon^{V} d_{0}^{V},1_{V})=m^{V}(1_{V},\varepsilon^{V} d_{1}^{V})=1_{V}$, $m^{h}(1_{S},n^{h})=\varepsilon^{h} d_{0}^{h}$, $m^{V}(1_{V},n^{V})=\varepsilon^{V} d_{0}^{V}$ and $m^{h}(n^{h},1_{S})=\varepsilon^{h} d_{1}^{h}$, $m^{V}(n^{V},1_{V})=\varepsilon^{V} d_{1}^{V}$.
\end{enumerate}
\end{proposition}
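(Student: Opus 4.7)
The plan is to exploit the definition of double group-groupoid as an internal groupoid in the category of group-groupoids. Unfolded, this says that the horizontal structural maps $d_0^h, d_1^h, \varepsilon^h, m^h, n^h$ are morphisms of group-groupoids when $S$ and $H$ are equipped with the \emph{vertical} group-groupoid structures $(S,V)$ and $(H,P)$, and symmetrically that $d_0^v, d_1^v, \varepsilon^v, m^v, n^v$ are morphisms with respect to the horizontal structures $(S,H)$ and $(V,P)$. Each identity in the proposition is then a direct transcription of a functoriality axiom or a bare groupoid axiom.

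I would take items (1)--(4) in order. Item (1) records that $(d_j^h,d_j^V)$ is a functor $(S,V)\to(H,P)$ for $j\in\{0,1\}$: the first equation is the source/target compatibility of this functor, the second is preservation of identities, and the third is preservation of vertical composition. Item (2) is the analogous statement that $(\varepsilon^h,\varepsilon^V)$ is a functor $(H,P)\to(S,V)$. Item (3) says that $(m^h,m^V)$ is a functor from the pullback groupoid $(S\times_H S,V\times_P V)$ to $(S,V)$; the third equation in this cluster is the interchange law for the two compositions, which in the internal-groupoid language is exactly the statement that $m^h$ preserves vertical composition. Item (4) says that vertical inversion $n^v$ is a group-groupoid endomorphism of $(S,V)$; writing out source/target, identity, and composition compatibilities gives the three listed equalities. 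In each case, one just reads off the corresponding face of the commuting diagram built into the definition.

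For item (5), the listed axioms are precisely the groupoid axioms for the two horizontal structures $(S,H)$ and $(V,P)$ viewed on their own: source/target of an identity is the original object ($d_i^h\varepsilon^h=1_S$, $d_i^V\varepsilon^V=1_V$); source/target of a composition factors through a projection ($d_i^h m^h=d_i^h\pi_{2-i}$); associativity of $m^h$ and $m^V$; left and right unit laws; and the two inverse laws $m^h(1_S,n^h)=\varepsilon^h d_0^h$, $m^h(n^h,1_S)=\varepsilon^h d_1^h$, together with their $V$-versions. These hold because the horizontal data $((S,H),(V,P))$ is, by the definition of a double group-groupoid, an internal groupoid (inside the category of groups), so the groupoid axioms are satisfied componentwise.

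The work is therefore essentially bookkeeping: one matches every equation in (1)--(4) to a single commuting square that expresses ``one structural map is a morphism of the other groupoid'', and every equation in (5) to one of the standard groupoid axioms for the horizontal data. I expect no substantial obstacle; the only point requiring a bit of care is matching the \emph{object components} correctly (for instance, $d_j^V$ is the object part of $d_j^h$ as a functor of vertical groupoids, and the object component of $n^h$ is $\mathrm{id}_H$ because inverses of identities are identities), after which each identity collapses to one face of a definitional diagram.
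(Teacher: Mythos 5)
Your proposal is correct and takes essentially the same route as the paper, which gives no separate proof beyond the remark preceding the proposition that items (1)--(4) record the functoriality of $(d_j^h,d_j^V)$, $(\varepsilon^h,\varepsilon^V)$, $(m^h,m^V)$, $(n^h,n^V)$ and that item (5) records the internal groupoid axioms of the horizontal structures --- precisely the bookkeeping you describe. One small caveat: in item (4) the functor in play is $(n^h,n^V)$, whose object component is $n^V$ rather than $\mathrm{id}_H$ (the printed $n^v$ is evidently a slip for $n^h$, since the vertical inversion itself swaps $d_0^v$ and $d_1^v$ and so cannot satisfy $d_i^v n = n^V d_i^v$); your displayed equations are nonetheless the right ones.
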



Let $\mathcal{G}$ and $\mathcal{G}'$ be two double group-groupoids. A morphism form  $\mathcal{G}$ to $\mathcal{G}'$ is a double groupoid morphism $\mathcal{F}=(f_s,f_h,f_v,f_p)\colon\mathcal{G}\rightarrow\mathcal{G}'$  such that $f_s\colon S\rightarrow S'$, $f_h\colon H\rightarrow H'$, $f_v\colon V\rightarrow V'$ and $f_p\colon P\rightarrow P'$ are group homomorphisms. Such a morphism of double group-groupoids may be denoted by a diagram as follows:

\[\xymatrix@R=4mm@C=7mm{
	& H \ar@{..>}@<.9ex>[dd]\ar@{..>}@<-.9ex>[dd] \ar[dl] \ar[rr]^{f_h} & & H' \ar[dl]\ar@<.9ex>[dd]\ar@<-.9ex>[dd]  \\
	S  \ar[rr]^(.7){f_s} \ar@<.9ex>[ur]\ar@<-.9ex>[ur] \ar@<.9ex>[dd]\ar@<-.9ex>[dd] & & S' \ar@<.9ex>[ur]\ar@<-.9ex>[ur] \ar@<.9ex>[dd]\ar@<-.9ex>[dd] \\
	& P \ar@{..>}'[r]_{f_p}[rr] \ar[dl]\ar@{..>}[uu] & & P' \ar[uu]\ar[dl] \\
	V \ar[uu] \ar@<.9ex>[ur]\ar@<-.9ex>[ur] \ar[rr]_{f_v} & & V' \ar[uu] \ar@<.9ex>[ur]\ar@<-.9ex>[ur] \\
}\]

We write  $\DGG$ for the category with objects double groupoids and morphisms as arrows.

\begin{lemma}\label{lemcomp}
	In a double group-groupoid, as a consequence of interchange laws (Eq. \ref{inthv}), the vertical and horizontal compositions of squares can be written in terms of the group operations as
\begin{align*}
\beta_1\circ_{h}\beta&= \beta_1-\varepsilon^h d_{1}^{h}(\beta)+\beta=\beta-\varepsilon^h d_{1}^{h}(\beta)+\beta_1,\\
\alpha_1\circ_{v}\alpha&= \alpha_1-\varepsilon^{v}d_{1}^{v}(\alpha)+\alpha=\alpha-\varepsilon^{v}d_{1}^{v}(\alpha)+\alpha_1
\end{align*}	
for all $\alpha,\alpha_1,\beta,\beta_1\in G$ such that $d_{1}^{v}(\alpha)=d_{0}^{v}(\alpha_1)$ and $d_{1}^{h}(\beta)=d_{0}^{h}(\beta_1)$.\qed
\end{lemma}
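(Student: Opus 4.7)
The plan is to derive both equalities from the $+$--$\circ_h$ interchange law in (\ref{inthv}); the vertical formulas follow by the identical argument with $\circ_h$ replaced by $\circ_v$ throughout. Fix composable $\beta,\beta_1 \in S$ with $d_1^h(\beta) = d_0^h(\beta_1)$ and set $e = \varepsilon^h d_1^h(\beta)$. Since $d_0^h, d_1^h, \varepsilon^h$ are group morphisms, $e$ is the identity square on $d_1^h(\beta)$, the zero $0 \in S$ equals $\varepsilon^h(0)$ and therefore is the identity square at $0 \in H$, and the translates $\beta - e$ and $\beta_1 - e$ satisfy $d_1^h(\beta - e) = 0$ and $d_0^h(\beta_1 - e) = 0$ respectively.

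First I will establish a sublemma: if $x, y \in S$ with $d_0^h(x) = 0$ and $d_1^h(y) = 0$, then $(x, y)$ is $\circ_h$-composable at $0$ and $x \circ_h y = x + y$. This is immediate by applying interchange to $(x + 0) \circ_h (0 + y)$: both pairs $(x, 0)$ and $(0, y)$ are $\circ_h$-composable at $0$, and $x \circ_h 0 = x$, $0 \circ_h y = y$ because $0 \in S$ acts as the identity square at $0 \in H$.

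With the sublemma in hand, decompose $\beta_1 = (\beta_1 - e) + e$ and $\beta = (\beta - e) + e$. The pairs $(\beta_1 - e, \beta - e)$ and $(e, e)$ are both $\circ_h$-composable, so the interchange law produces
\[\beta_1 \circ_h \beta = \bigl((\beta_1 - e) \circ_h (\beta - e)\bigr) + (e \circ_h e).\]
The sublemma evaluates the first summand to $(\beta_1 - e) + (\beta - e)$ while $e \circ_h e = e$, so the right-hand side collapses to $\beta_1 - e + \beta$, establishing the first equality.

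For the second equality I will use that group inversion is a $\circ_h$-functor: interchange applied to $(x \circ_h y) + \bigl((-x) \circ_h (-y)\bigr) = (x - x) \circ_h (y - y) = 0 \circ_h 0 = 0$ shows $-(x \circ_h y) = (-x) \circ_h (-y)$. Applying the first equality, now to $-\beta_1$ and $-\beta$ (still composable, since the source and target maps are group morphisms), gives $(-\beta_1) \circ_h (-\beta) = -\beta_1 + e - \beta$, and negating both sides yields $\beta_1 \circ_h \beta = \beta - e + \beta_1$. The only genuinely delicate point is the source--target bookkeeping needed to justify each invocation of interchange; once these composabilities are verified the rest of the argument is routine group manipulation.
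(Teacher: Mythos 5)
Your argument is correct and fills in precisely the computation the paper leaves implicit: the lemma is stated with its proof omitted as an immediate consequence of the interchange laws, and your Eckmann--Hilton-style derivation (translate by the identity square $e=\varepsilon^h d_1^h(\beta)$, reduce to squares with a trivial face, apply the $+$--$\circ_h$ interchange, and get the reversed-order formula via $-(x\circ_h y)=(-x)\circ_h(-y)$) is the standard route the authors intend. All composability checks are handled correctly, so nothing further is needed.
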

Thus the horizontal inverse of $\beta\in S$ is \[\beta^{-h}=\varepsilon^h d_{0}^{h}(\beta)-\beta+\varepsilon^h d_{1}^{h}(\beta)=\varepsilon^h d_{1}^{h}(\beta)-\beta+\varepsilon^h d_{0}^{h}(\beta)\] and the  vertical inverse of $\alpha\in S$ is \[\alpha^{-v}=\varepsilon^{v}d_{0}^{v}(\alpha)-\alpha+\varepsilon^{v}d_{1}^{v}(\alpha)=\varepsilon^{v}d_{1}^{v}(\alpha)-\alpha+
\varepsilon^{v}d_{0}^{v}(\alpha).\]
In particular, if $\alpha\in\Ker d_{0}^{v}$ and $\beta\in\Ker d_{0}^{h}$ then
\begin{align*}
\beta^{-h}=-\beta+\varepsilon^h d_{1}^{h}(\beta)=\varepsilon^h d_{1}^{h}(\beta)-\beta \tag{4.2}\label{hinv}
\end{align*}
and
\begin{align*}
\alpha^{-v}=-\alpha+\varepsilon^{v}d_{1}^{v}(\alpha)=\varepsilon^{v}d_{1}^{v}(\alpha)-\alpha. \tag{4.3}\label{vinv}
\end{align*}

In Lemma \ref{lemcomp} if we take $\beta$ with $d_{1}^{h}(\beta)=0$ and $\alpha$ with $d_{1}^{v}(\alpha)=0$ then we obtain the following result.

\begin{corollary}\label{kercom}
	Let $\mathcal{G}$ be a double group-groupoid and $\alpha,\alpha_1,\beta,\beta_1\in S$ with $d_{1}^{v}(\alpha)=0=d_{0}^{v}(\alpha_1)$ and $d_{1}^{h}(\beta)=0=d_{0}^{h}(\beta_1)$. Then
\begin{align*}
\beta_1\circ_{h}\beta&= \beta_1+\beta=\beta+\beta_1,\\
\alpha_1\circ_{v}\alpha&= \alpha_1+\alpha=\alpha+\alpha_1
\end{align*}
i.e. squares in $\Ker d_{0}^{v}$ (resp. $\Ker d_{0}^{h}$) and in $\Ker d_{1}^{v}$ (resp. $\Ker d_{1}^{h}$) are commutative.
\end{corollary}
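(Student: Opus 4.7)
The plan is to obtain the corollary as an immediate specialization of Lemma \ref{lemcomp}, using only that the structural maps $\varepsilon^h$ and $\varepsilon^v$ preserve the additive identity.

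First I would note that in a double group-groupoid the object inclusions $\varepsilon^h \colon H \to S$ and $\varepsilon^v \colon V \to S$ are group homomorphisms (this is part of the compatibility built into the four-fold structure, just as in the definition of a group-groupoid). In particular $\varepsilon^h(0)=0$ and $\varepsilon^v(0)=0$.

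Next, given $\beta,\beta_1\in S$ with $d_1^h(\beta)=0=d_0^h(\beta_1)$, the horizontal composite $\beta_1\circ_h\beta$ is defined (the composability condition $d_1^h(\beta)=d_0^h(\beta_1)$ holds, both sides being $0$), so Lemma \ref{lemcomp} applies and gives
\[
\beta_1\circ_h\beta = \beta_1-\varepsilon^h d_1^h(\beta)+\beta = \beta-\varepsilon^h d_1^h(\beta)+\beta_1.
\]
Substituting $\varepsilon^h d_1^h(\beta)=\varepsilon^h(0)=0$, the two expressions collapse to $\beta_1+\beta$ and $\beta+\beta_1$, respectively. Since they are equal, we obtain simultaneously $\beta_1\circ_h\beta=\beta_1+\beta=\beta+\beta_1$. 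The vertical case is completely analogous, applying the second identity of Lemma \ref{lemcomp} to $\alpha,\alpha_1$ with $d_1^v(\alpha)=0=d_0^v(\alpha_1)$ and using $\varepsilon^v(0)=0$.

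There is really no obstacle here; the only small point worth flagging explicitly is that the two expressions for $\beta_1\circ_h\beta$ in Lemma \ref{lemcomp} encode a commutativity statement the moment the correction term $\varepsilon^h d_1^h(\beta)$ vanishes, which is precisely why restricting to the kernels of the source/target maps forces the additive structure to commute. This is the usual Eckmann--Hilton phenomenon in the setting of internal groupoids in groups, so I would end by remarking that the conclusion simply expresses that the additive monoid structures on $\Ker d_0^v,\Ker d_1^v,\Ker d_0^h,\Ker d_1^h$ are abelian, coinciding with the respective groupoid compositions.
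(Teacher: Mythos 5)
Your proof is correct and follows exactly the paper's route: the paper also obtains this corollary by specializing Lemma \ref{lemcomp} to the case $d_1^h(\beta)=0$ and $d_1^v(\alpha)=0$, so that the correction term $\varepsilon^h d_1^h(\beta)$ (resp.\ $\varepsilon^v d_1^v(\alpha)$) vanishes because $\varepsilon^h$ and $\varepsilon^v$ are group homomorphisms. Your explicit remark that the two expressions in Lemma \ref{lemcomp} then collapse to the commutativity statement is exactly the intended (Eckmann--Hilton type) argument.
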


Following corollary is a consequence of Equations (\ref{hinv}),(\ref{vinv}) and Corollary \ref{kercom}.

\begin{corollary}
	Let $\mathcal{G}$ be a double group-groupoid. Then \[\alpha+\alpha_1-\alpha=\varepsilon^{v}d_{1}^{v}(\alpha)+\alpha_1-\varepsilon^{v}d_{1}^{v}(\alpha)\] and \[\beta+\beta_1-\beta=\varepsilon^{h}d_{1}^{h}(\beta)+\beta_1-\varepsilon^{h}d_{1}^{h}(\beta)\] for all $\alpha,\alpha_1\in \Ker d_{0}^{v}$ and $\beta,\beta_1\in \Ker d_{0}^{h}$.
\end{corollary}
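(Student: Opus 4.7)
The plan is to prove the vertical identity by using Equation (\ref{vinv}) to trade the group-theoretic inverse $-\alpha$ for the vertical-groupoid inverse $\alpha^{-v}$, and then exploiting Corollary \ref{kercom} to commute that inverse past $\alpha_1$. Setting $e:=\varepsilon^{v}d_{1}^{v}(\alpha)$, Equation (\ref{vinv}) gives $\alpha^{-v}=-\alpha+e$, whence $-\alpha=\alpha^{-v}-e$, so
\[ \alpha+\alpha_1-\alpha \;=\; \alpha+\alpha_1+\alpha^{-v}-e. \]

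Next I would observe that $\alpha_1\in\Ker d_{0}^{v}$ by hypothesis, while $\alpha^{-v}\in\Ker d_{1}^{v}$, because $d_{1}^{v}(\alpha^{-v})=d_{0}^{v}(\alpha)=0$. The pair $(\alpha^{-v},\alpha_1)$ therefore satisfies the hypotheses of Corollary \ref{kercom}, which yields $\alpha_1+\alpha^{-v}=\alpha^{-v}+\alpha_1$. Substituting this into the display above and then using $\alpha+\alpha^{-v}=\alpha+(-\alpha+e)=e$ (straight from the inversion formula) collapses the right-hand side to
\[ \alpha+\alpha^{-v}+\alpha_1-e \;=\; e+\alpha_1-e \;=\; \varepsilon^{v}d_{1}^{v}(\alpha)+\alpha_1-\varepsilon^{v}d_{1}^{v}(\alpha), \]
which is the claimed identity. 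The horizontal identity is obtained by the same three-step argument with $\circ_v,\varepsilon^{v},d_{i}^{v}$ replaced by $\circ_h,\varepsilon^{h},d_{i}^{h}$ and Equation (\ref{vinv}) replaced by (\ref{hinv}).

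There is essentially no real obstacle here: the content is just the observation that the group-inverse $-\alpha$ and the groupoid-inverse $\alpha^{-v}$ differ only by the identity $\varepsilon^{v}d_{1}^{v}(\alpha)$, combined with the commutativity already established in Corollary \ref{kercom}. The one detail requiring a moment of care is matching the kernel hypotheses of Corollary \ref{kercom}: it is precisely the assumption $\alpha\in\Ker d_{0}^{v}$ that forces $\alpha^{-v}\in\Ker d_{1}^{v}$, which in turn is exactly what is needed for Corollary \ref{kercom} to apply to $(\alpha^{-v},\alpha_1)$.
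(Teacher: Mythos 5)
Your proof is correct and follows essentially the route the paper intends: the paper states this corollary as an immediate consequence of the inversion formulas (4.2)--(4.3) and the preceding commutativity corollary, and your argument simply makes that deduction explicit by trading $-\alpha$ for $\alpha^{-v}-\varepsilon^{v}d_{1}^{v}(\alpha)$ and commuting $\alpha^{-v}\in\Ker d_{1}^{v}$ past $\alpha_1\in\Ker d_{0}^{v}$. The check that the kernel hypotheses of that corollary are met by the pair $(\alpha^{-v},\alpha_1)$ is exactly the point that needed care, and you have handled it correctly.
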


\begin{theorem}
	The category $\XMod(\GpGd)$ of crossed modules over group-groupoids is equivalent to the category $\DGG$ of double group-groupoids.
\end{theorem}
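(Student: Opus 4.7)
The plan is to construct a pair of functors $\Phi\colon\XMod(\GpGd)\to\DGG$ and $\Psi\colon\DGG\to\XMod(\GpGd)$ and show they are quasi-inverse to each other. The functor $\Phi$ is essentially already given in Example \ref{exampledgpd}: send $(G,H,\partial)$ to the double group-groupoid with squares $G\rtimes H$ and face maps $d_0^h(a,b)=b$, $d_1^h(a,b)=\partial_1(a)+b$, etc. On morphisms, $(f,g)\colon(G,H,\partial)\to(G',H',\partial')$ is sent to the evident quadruple $(f\rtimes g,\,g,\,f_0\rtimes g_0,\,g_0)$. In the opposite direction, given a double group-groupoid $\mathcal{G}=(S,H,V,P)$, I set $\Psi(\mathcal{G})=(G,H,\partial)$ where $G=\Ker d_0^h\subseteq S$ carries the group-groupoid structure inherited from $(S,V)$ restricted to $\Ker d_0^V\subseteq V$, and $\partial=d_1^h\!\upharpoonright_G\colon G\to H$. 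The action of $H$ on $G$ is the conjugation pulled back through $\varepsilon^h$, namely $b\cdot\alpha=\varepsilon^h(b)+\alpha-\varepsilon^h(b)$. On morphisms, $\Psi$ restricts $f_s$ to $\Ker d_0^h$ and keeps $f_h$.

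Next I would verify that $\Psi(\mathcal{G})$ really is a crossed module over group-groupoids. By the Proposition following the definition of crossed module of group-groupoids, it suffices to check that $(G,H,\partial_1)$ is a crossed module of groups. That $\partial$ is a group-groupoid morphism follows because $d_1^h$ is a group homomorphism and a functor from $(S,V)$ to $(H,P)$, and $G$, $H$ are closed under the respective structures; that $H$ acts on $G$ is inherited from conjugation in $S$ together with compatibility of $d_i^h$ with $\varepsilon^h$. The first crossed module identity $\partial(b\cdot\alpha)=b+\partial(\alpha)-b$ is immediate from $d_1^h\varepsilon^h=1_H$. The second identity $\partial(\alpha)\cdot\alpha_1=\alpha+\alpha_1-\alpha$ for $\alpha,\alpha_1\in\Ker d_0^h$ is precisely the last corollary before the theorem, which itself rests on the commutativity Corollary \ref{kercom} for kernel squares, a direct consequence of the interchange law (\ref{inthv}).

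Finally I would exhibit the two natural isomorphisms. For $\Psi\Phi\cong\mathrm{id}$, the inclusion $G\hookrightarrow G\rtimes H$, $a\mapsto(a,0)$, identifies $G$ with $\Ker d_0^h$ and sends $\partial$ to $d_1^h|_{(a,0)}=\partial_1(a)$, with the conjugation action agreeing on the nose. For $\Phi\Psi\cong\mathrm{id}$, one uses the map $\theta\colon G\rtimes H\to S$, $(\alpha,b)\mapsto\alpha+\varepsilon^h(b)$, with inverse $\beta\mapsto(\beta-\varepsilon^h d_0^h(\beta),\,d_0^h(\beta))$; this is the same construction that established the semidirect product decomposition in Section~3 applied to the group-groupoid $(S,V)$ with the split extension produced by $\varepsilon^h$. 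The main obstacle, and the step that really uses the double-groupoid axioms, is showing that $\theta$ is an isomorphism of double group-groupoids: it must preserve not only group addition and the vertical groupoid structure (the semidirect-product construction already handles this) but also the horizontal faces $d_i^h$, the horizontal identity $\varepsilon^h$, and the horizontal composition $\circ_h$. Compatibility with $d_0^h,d_1^h,\varepsilon^h$ reduces to the identities in items (\ref{p1})--(\ref{p2}) of the Proposition and the formula $d_1^h(\alpha)=\partial(\alpha)$ in $\Ker d_0^h$. Compatibility with $\circ_h$ is the subtle point: one rewrites $\circ_h$ via Lemma \ref{lemcomp} as a combination of $+$ and $\varepsilon^h d_1^h$, then uses the interchange law (\ref{inthv}) together with commutativity of kernel squares (Corollary \ref{kercom}) to match $\theta((\alpha_1,b_1)\circ_h(\alpha,b))$ with $\theta(\alpha_1,b_1)\circ_h\theta(\alpha,b)$. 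Once this is verified, naturality in both directions is formal, completing the equivalence.
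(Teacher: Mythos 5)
Your proposal follows essentially the same route as the paper: the functor to $\DGG$ is the construction of Example \ref{exampledgpd}, the inverse functor takes $\mathcal{G}=(S,H,V,P)$ to the crossed module on $\Ker d_{0}^{h}\to H$ with the conjugation action through $\varepsilon^{h}$, and the natural isomorphisms are the inclusion $a\mapsto(a,0)$ and the semidirect-product decomposition $(\alpha,b)\mapsto\alpha+\varepsilon^{h}(b)$ (the paper writes these with slightly different sign/order conventions). Your extra detail on verifying the Peiffer identity via the final corollary and on compatibility with $\circ_{h}$ via Lemma \ref{lemcomp} fills in exactly the steps the paper leaves as ``straightforward.''
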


\begin{proof}
 Example \ref{exampledgpd} gives rise to a functor  $\theta\colon\XMod(\GpGd)\rightarrow\DGG$
which associates a crossed module over group-groupoids with a double groupoid.

Conversely for a double group-groupoid  $\mathcal{G}=(S,H,V,P)$, we define a crossed module $(K,H,\partial)$ associated with  $\mathcal{G}$  where
\begin{align*}
  K&= (\Ker d_{0}^{h},\Ker d_{0}^{V},d_{0}^{v},d_{1}^{v},\varepsilon^{v},m^{v},n^{v}),\\
  H&= (H,P,d_{0}^{H},d_{1}^{H},\varepsilon^{H},m^{H},n^{H})
\end{align*}
and $\partial=(\partial_1=d_{1}^{h},\partial_0=d_{1}^{V})$. Such a crossed module can be  visualized as in the following  diagram
\[\xymatrix @=4pc {
	\Ker d_{0}^{h} \ar[r]^{\partial_1} \ar @<1ex>[d]^{d_1^{v}}  \ar@<-1ex>[d]_{d_0^{v}} & H  \ar @<1ex>[d]^{d_1^{{  H}}} \ar @<-1ex> [d]_{d_0^{  H}} \\
	\Ker d_{0}^{V} \ar[u]|{\varepsilon^{v}}  \ar[r]^{\partial_0}  & P  \ar[u]|{\varepsilon^{  H}}}\]
Here the action of $H$ on $\Ker d_{0}^{h}$ is given by
\[b\cdot a=\varepsilon^h(b)+a-\varepsilon^h(b)\] for $b\in H$ and $a\in\Ker d_{0}^{h}$. Hence we a functor
 $\gamma\colon \DGG\rightarrow \XMod(\GpGd)$

 We now show that these functors are equivalences of categories. In order to define a natural equivalence $S\colon\theta\gamma\Rightarrow 1_{\DGG}$, for an object $\mathcal{G}$ in $\DGG$ a morphism  $S_{\mathcal{G}}=(f_s,1,f_v,1)\colon\theta\gamma(\mathcal{G})\rightarrow\mathcal{G}$ is defined by $f_v(a,x)=a+\varepsilon^{V}(x)$ and $f_s(\alpha,b)=\alpha-\varepsilon^{h}(b)$ for $(a,x)\in V\rtimes P$ and $(\alpha,b)\in S\rtimes H$.

Conversely, a natural equivalence $T\colon 1_{\XMod(\GpGd)}\Rightarrow\gamma\theta$ can be defined such a way that  for  a crossed module $\mathcal{C}=(  G,  H,\partial)$,  the morphism  $T_{\mathcal{C}}=(f,g)\colon \mathcal{C}\rightarrow \gamma\theta(\mathcal{C})$ is given by  $f_1(a)=(0,a)$ for $a\in G$ and $g$ is the identity.

Other details are straightforward and so are omitted.
\end{proof}


\section{Crossed modules and crossed squares}
In this section we prove that the category  $\XMod(\GpGd)$ of crossed modules over group-groupoids and the category $\XSq(\Gp)$ of crossed squares over groups  are equivalent.

 We now  recall the definition of crossed squares from \cite{BrLod}. Further we will prove that the category $\XMod(\GpGd)$ of crossed modules over group-groupoids and the category $\XSq(\Gp)$ of crossed squares over groups are equivalent.

\begin{definition}\label{xsq}
A crossed square over groups
\begin{center}
	\begin{tabular}{m{0.05\linewidth} m{0.15\linewidth}}
		\centering $\mathcal{X}$ : &
		\centering $\xymatrix @=3pc {
			L \ar[r]^\lambda \ar[d]_{\lambda'}  & M \ar[d]^{\mu} \\
			N  \ar[r]_{\nu}  & P}$
	\end{tabular}
\end{center}
consists of four morphisms of groups $\lambda\colon L\rightarrow M$, $\lambda'\colon L\rightarrow N$, $\mu\colon M \rightarrow P$ and $\nu\colon N\rightarrow P$, such that $\nu \lambda'= \mu \lambda$ together with actions of the group $P$ on $L$, $M$, $N$ on the left, conventionally, (and hence actions of $M$ on $L$ and $N$ via $\mu$ and of $N$ on $L$ and $M$ via $\nu$) and a function $h\colon M\times N\rightarrow L$. These are subject to the following axioms:
\begin{enumerate}[label={[CS \arabic{*}]}, leftmargin=2cm]
	\item $\lambda$, $\lambda'$ are $P$-equivariant  and $\mu$, $\nu$ and $\kappa=\mu\lambda$ are crossed modules,
	\item $\lambda h(m,n)=m+n\cdot(-m)$,\ \ $\lambda' h(m,n)=m\cdot n-n$,
	\item $h(\lambda(l),n)=l+n\cdot(-l)$,\ \ $h(m,\lambda'(l))=m\cdot l-l$,
	\item $h(m+m',n)=m\cdot h(m',n)+h(m,n)$,\ \ $h(m,n+n')=h(m,n)+n\cdot h(m,n')$,
	\item $h(p\cdot m,p\cdot n)=p\cdot h(m,n)$ and
\end{enumerate}
for all $l\in L$, $m,m'\in M$, $n,n'\in N$ and $p\in P$.
\end{definition}

Norrie \cite{KNor} mentioned that a normal subcrossed module denotes a crossed square as a normal subgroup denotes a crossed module. With this idea a normal subcrossed square will be in form a crossed 3-cube, and so on.

\begin{example}\cite{KNor}
	Let $(A,B,\partial)$ be crossed module and $(S,T,\sigma)$ a normal subcrossed module of $(A,B,\partial)$. Then
	\[\xymatrix @=3pc {
		S \ar@{^{(}->}[d]_\inc \ar[r]^{\sigma}  & T \ar@{^{(}->}[d]^{\inc}  \\
		A  \ar[r]_{\partial}  & B}\]
	forms a crossed square of groups where the action of $B$ on $S$ is induced action from the action of $B$ on $A$ and the action of $B$ on $T$ is conjugation. The h map is defined by
	\[\begin{array}{rccl}
	h: & T\times A & \rightarrow & S \\
	   &(t,a)      & \mapsto     & h(t,a)=t\cdot a -a
	\end{array}\]
	for all $t\in T$ and $a\in A$.
\end{example}

A geometric example of crossed squares is the fundamental crossed square which is defined in \cite{Loday82} as follows: Suppose given a commutative square of spaces
\begin{center}
	\begin{tabular}{m{0.05\linewidth} m{0.15\linewidth}}
		\centering ${\bm X}$ : &
		\centering $\xymatrix @=3pc {
			C \ar[r]^f \ar[d]_{g}  & A \ar[d]^{a} \\
			B  \ar[r]_{b}  & X}$
	\end{tabular}
\end{center}
Let $F(f)$ be the homotopy fibre of $f$ and $F(X)$ the homotopy fibre of $F(g)\rightarrow F(a)$. Then the commutative square of groups
\begin{center}
	\begin{tabular}{m{0.08\linewidth} m{0.15\linewidth}}
		\centering ${\Pi\bm X}$ : &
		\centering $\xymatrix @=3pc {
			\pi_1F(\bm X) \ar[r] \ar[d]  & \pi_1F(g) \ar[d] \\ \pi_1F(f)  \ar[r]  & \pi_1(C)}$
	\end{tabular}
\end{center}
associated to $\bm X$ is naturally equipped with a structure of crossed square. $\Pi\bm X$ is called the fundamental crossed square of $\bm X$ \cite{BrLod}.

\begin{example}
	Let $(A,B,\alpha)$ be a topological crossed module. Then we know that $(\pi A,A,d_{0}^{A},d_{1}^{A},\varepsilon^A,m^A)$ and $(\pi B,B,d_{0}^{B},d_{1}^{B},\varepsilon^B,m^B)$ are fundamental group-groupoids. Then
	\[\xymatrix{
		\Ker d_{0}^{A} \ar[r]^{\pi\alpha} \ar[d]_{d_{1}^{A}}  & \Ker d_{0}^{B} \ar[d]^{d_{1}^{B}}  \\
		A \ar[r]_{\alpha}    & B}\]
	has a crossed square structure where $h([\beta],a)=[\beta\cdot a-a]$ for all $[\beta]\in \Ker d_{0}^{B}$ and $a\in A$. Here the path $(\beta\cdot a-a)\colon [0,1]\rightarrow A$ is given by $(\beta\cdot a-a)(r)=\beta(r)\cdot a-a$ for all $r\in [0,1]$.
\end{example}

A morphism of crossed squares from $\mathcal{X}_1$ to $\mathcal{X}_2$ consists of four group homomorphisms $f_L\colon L_1\rightarrow L_2$, $f_M\colon M_1\rightarrow M_2$, $f_N\colon N_1\rightarrow N_2$ and $f_P\colon P_1\rightarrow P_2$ which are compatible with the actions and the functions $H$ and $h_2$.


\[\xymatrix@R=4mm@C=7mm{
& M_1 \ar@{..>}'[d][dd]_{\mu_1} \ar[rr]^{f_M} & & M_2 \ar[dd]^{\mu_2}  \\
L_1 \ar[dd]_{\lambda'_1} \ar[rr]^(.7){f_L} \ar[ur]^{\lambda_1} & & L_2 \ar[dd]_(.3){\lambda'_2} \ar[ur]_{\lambda_2} \\
& P_1 \ar@{..>}'[r]_{f_P}[rr] & & P_2 \\
N_1 \ar[rr]_{f_N} \ar[ur]^{\nu_1} & & N_2 \ar[ur]_{\nu_2} \\
}\]

Category of crossed squares over groups with morphisms between crossed squares defined above is denoted by $\XSq(\Gp)$. Crossed squares are equivalent to the crossed modules over crossed modules.

\begin{theorem}
	The category $\XMod(\GpGd)$ of crossed modules over group-groupoids is equivalent to the category $\XSq(\Gp)$ of crossed squares over groups.
\end{theorem}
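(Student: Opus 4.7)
The plan is to construct functors $\Phi\colon \XMod(\GpGd)\to\XSq(\Gp)$ and $\Psi\colon \XSq(\Gp)\to\XMod(\GpGd)$ by applying the classical Brown--Spencer equivalence separately to the domain group-groupoid $G$ and the codomain group-groupoid $H$ of a crossed module $(G,H,\partial)$, and then promoting the derived action of $H$ on $G$ into the $h$-map data of a crossed square.

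For $\Phi(G,H,\partial)$ I would set $L=\Ker d_0^G$, $M=\Ker d_0^H$, $N=G_0$, $P=H_0$, with $\lambda=\partial_1|_L$, $\lambda'=d_1^G|_L$, $\mu=d_1^H|_M$ and $\nu=\partial_0$. Commutativity of the resulting square and the fact that $\mu$, $\nu$ and $\kappa=\mu\lambda$ are crossed modules of groups follow from Brown--Spencer applied to $G$ and $H$ together with the functoriality of $\partial$. The $h$-map is defined by
\[ h(b,x)=b\cdot \varepsilon^G(x)-\varepsilon^G(x) \]
for $b\in M$ and $x\in N$, using the derived action of $H$ on $G$; that $h(b,x)\in L$ follows from Remark \ref{remark}. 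Axioms [CS2] and [CS3] then reduce, via the crossed-module identity $\partial_1(b\cdot a)=b+\partial_1(a)-b$ and Lemma \ref{indact}, to routine computation; $P$-equivariance of $\lambda,\lambda'$ and axioms [CS4]--[CS5] follow from Remark \ref{remark}, particularly the interchange law in item (5).

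For the inverse $\Psi$, given a crossed square, I would take $G=L\rtimes N$ (the group-groupoid associated to $(L,N,\lambda')$, with $G_0=N$) and $H=M\rtimes P$ (associated to $(M,P,\mu)$, with $H_0=P$); define $\partial\colon G\to H$ on objects by $\nu$ and on arrows by $(l,n)\mapsto(\lambda(l),\nu(n))$, well-defined thanks to $\nu\lambda'=\mu\lambda$. The crucial step is constructing the action of $H$ on $G$: on arrows one sets
\[ (m,p)\cdot(l,n)=\bigl(p\cdot l+h(m,\,\lambda'(p\cdot l)+p\cdot n),\ p\cdot n\bigr), \]
so that the $h$-map of the crossed square becomes precisely the derived action of $M=\Ker d_0^H$ on $L=\Ker d_0^G$ under the embedding $\varepsilon^G$, matching the formula used to define $\Phi$. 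Axioms [CS1]--[CS5] then translate into the crossed-module-of-group-groupoids identities.

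The main obstacle I expect is the verification in the $\Psi$ direction: showing that the proposed formula really is a group action, that it makes $\partial$ into a crossed module of group-groupoids (in particular verifying the Peiffer identity at the group-groupoid level), and that each of [CS1]--[CS5] is used essentially in the reconstruction. Once both functors are established, the natural isomorphisms $\Phi\Psi\cong 1_{\XSq(\Gp)}$ and $\Psi\Phi\cong 1_{\XMod(\GpGd)}$ follow componentwise: on the group-groupoid side they reduce to the canonical Brown--Spencer isomorphism $G\cong \Ker d_0^G\rtimes G_0$, and on the crossed-square side the $h$-map is recovered by construction from the $H$-action on $G$.
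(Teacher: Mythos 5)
Your functor $\Phi$ is exactly the one the paper uses: the same quadruple $L=\Ker d_{0}^{G}$, $M=\Ker d_{0}^{H}$, $N=G_0$, $P=H_0$, the same structure maps, and the same $h$-map $h(m,n)=m\cdot \varepsilon^{G}(n)-\varepsilon^{G}(n)$, so that half of your outline matches the paper's proof step for step. The genuine gap is the action formula in the inverse direction $\Psi$. Transporting the $H$-action on $G$ along the isomorphism $L\rtimes N\to G$, $(l,n)\mapsto l+\varepsilon^{G}(n)$, and using $m\cdot \varepsilon^{G}(n')=h(m,n')+\varepsilon^{G}(n')$ together with $(m+\varepsilon^{H}(p))\cdot\bigl(l+\varepsilon^{G}(n)\bigr)=m\cdot(p\cdot l)+m\cdot \varepsilon^{G}(p\cdot n)$, one gets the formula the paper uses, namely
\[
(m,p)\cdot(l,n)=\bigl(m\cdot(p\cdot l)+h(m,p\cdot n),\ p\cdot n\bigr).
\]
Your formula $\bigl(p\cdot l+h(m,\lambda'(p\cdot l)+p\cdot n),\ p\cdot n\bigr)$ is not equal to this: expanding its first component with [CS 4], [CS 3] and the Peiffer identity for $\lambda'$ gives $p\cdot l+\bigl(m\cdot(p\cdot l)+h(m,p\cdot n)\bigr)-p\cdot l$, i.e.\ the correct value conjugated by $p\cdot l$, and that conjugation does not disappear when $L$ is nonabelian.

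Worse, your formula fails to be a group action at all. Restricting to $p=0$, $n=0$ it reads $m\bullet l=l+h(m,\lambda'(l))=l+m\cdot l-l$. Take the commutator crossed square in which $L=M=N=P$ is a nonabelian group, every map is the identity, every action is conjugation and $h(m,n)=m+n-m-n$; with $L=S_3$, $l=(1\,2)$, $m=(1\,3)$, $m'=(2\,3)$ one computes $(m+m')\bullet l=(2\,3)$ while $m\bullet(m'\bullet l)=(1\,3)$. So as written $\Psi$ does not produce an object of $\XMod(\GpGd)$, and the equivalence cannot be completed from your formula. Everything else in your plan --- the choices $G=L\rtimes N$, $H=M\rtimes P$, $\partial=(\lambda\times\nu,\nu)$, and the componentwise description of the two natural isomorphisms --- coincides with the paper's argument, and once the action is replaced by the displayed formula the remaining verifications are the routine ones you anticipated.
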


\begin{proof}
Define a functor $\delta\colon\XMod(\GpGd)\rightarrow\XSq(\Gp)$ as in the following way: Let $(G,  H,\partial)$ be a crossed module over group-groupoids. If we set $L=\Ker d_{0}^{  G}$, $M=\Ker d_{0}^{  H}$, $N=G_0$, $P=H_0$, $\lambda=\partial_1$, $\lambda'=d_{1}^{  G}$, $\mu=d_{1}^{  H}$ and $\nu=\partial_0$ then
\begin{center}
\begin{tabular}{m{0.19\linewidth}m{0.15\linewidth}}
	\centering $\delta((  G,  H,\partial))=\mathcal{X}$ :  &
	\centering $\xymatrix @=4pc {
		L \ar[r]^{\lambda} \ar[d]_{\lambda'} & M   \ar[d]^{\mu} \\
		N \ar[r]_{\nu} & P }$
\end{tabular}
\end{center}
is a crossed square. Here the action of $P$ on $N$ is already given and the action of $M$ on $L$ is induced action. Actions of $P$ on $M$ and on $L$ are given by
\[
p\cdot m = \varepsilon^{  H}(p)+m-\varepsilon^{  H}(p) \ \ \ \text{ and } \ \ \ p\cdot l = \varepsilon^{  H}(p)\cdot l
\]
respectively, for $p\in P$, $m\in M$ and $l\in L$. Moreover,
\[\begin{array}{rcccl}
h & \colon & M\times N & \rightarrow & L \\
& & (m,n)      & \mapsto     & h(m,n)=m\cdot \varepsilon^{G}(n)-\varepsilon^{G}(n).
\end{array}\]
Now we will verify that $\mathcal{X}$ satisfies the conditions \textbf{[CS1]-[CS6]} of Definition \ref{xsq}. Let $l\in L$, $m,m'\in M$, $n,n'\in N$ and $p\in P$. Then
\begin{enumerate}[label=\textbf{[CS \arabic{*}]}, leftmargin=2cm]
	\item It is easy to see that $\mu$, $\nu$ and $\kappa=\mu\lambda$ are crossed modules. Now we will show that $\lambda$ and $\lambda'$ are $P$-equivariant. Let $p\in P$ and $l\in L$. Then
\[\lambda(p\cdot l)=\partial_1(\varepsilon^{  H}(p)\cdot l)=\varepsilon^{  H}(p)+\partial_1(l)-\varepsilon^{  H}(p)=p\cdot \lambda(l)\] and \[\lambda'(p\cdot l)=d_{1}^{  G}(\varepsilon^{  H}(p)\cdot l)=d_{1}^{  H}\varepsilon^{  H}(p)\cdot d_{1}^{  G}(l)=p\cdot \lambda'(l)\]
	\item Let $m\in M$ and $n\in N$. Then
	\begin{align*}	
	\lambda h(m,n)&=\partial_1(m\cdot \varepsilon^{G}(n)-  \varepsilon^{G}(n))\\
	&=\partial_1(m\cdot   \varepsilon^{G}(n))-\partial_1(  \varepsilon^{G}(n))\\
	&=m+\partial_1(  \varepsilon^{G}(n))-m-\partial_1(  \varepsilon^{G}(n))\\
	&=m+n\cdot (-m)
	\end{align*}
	and
	\begin{align*}
	\lambda' h(m,n)&=d_{1}^{  G}(m\cdot   \varepsilon^{G}(n)-  \varepsilon^{G}(n))\\
	&=d_{1}^{  G}(m\cdot   \varepsilon^{G}(n))-d_{1}^{  G}(  \varepsilon^{G}(n))\\
	&=d_{1}^{  H}(m)\cdot d_{1}^{  G}(  \varepsilon^{G}(n))-d_{1}^{  G}(  \varepsilon^{G}(n))\\
	&=d_{1}^{  H}(m)\cdot n-n\\
	&=m\cdot n - n
	\end{align*}
	\item Let $l\in L$ and $n\in N$. Then
	\begin{align*}	
	h(\lambda(l),n)&=\partial_1(l)\cdot   \varepsilon^{G}(n)-  \varepsilon^{G}(n) \\
	&=l+  \varepsilon^{G}(n)-l-  \varepsilon^{G}(n) \\
	&=l+n\cdot(-l)
	\end{align*}
	and
	\begin{align*}
	h(m,\lambda'(l))&=m\cdot   \varepsilon^{G}(d_{1}^{  G}(l))-  \varepsilon^{G}(d_{1}^{  G}(l))\\
	&=(m\cdot   \varepsilon^{G}(d_{1}^{  G}(l))-  \varepsilon^{G}(d_{1}^{  G}(l))+l)-l\\
	&=(m\cdot   \varepsilon^{G}(d_{1}^{  G}(l))\circ l)-l\\
	&=((m\cdot   \varepsilon^{G}(d_{1}^{  G}(l)))\circ (1_0\cdot l))-l\\
	&=((m\circ 1_0)\cdot (  \varepsilon^{G}(d_{1}^{  G}(l))\circ l))-l\\
	&=m\cdot l-l
	\end{align*}
	\item Let $m,m'\in M$ and $n\in N$. Then
	\begin{align*}	
	h(m+m',n)&=(m+m')\cdot   \varepsilon^{G}(n)-  \varepsilon^{G}(n) \\
	&=m\cdot (m'\cdot   \varepsilon^{G}(n))-  \varepsilon^{G}(n) \\
	&=m\cdot (m'\cdot   \varepsilon^{G}(n))+m\cdot (-  \varepsilon^{G}(n)+  \varepsilon^{G}(n))-  \varepsilon^{G}(n) \\
	&= m\cdot(m'\cdot   \varepsilon^{G}(n)-  \varepsilon^{G}(n))+m\cdot   \varepsilon^{G}(n) -  \varepsilon^{G}(n) \\
	&= m\cdot h(m',n)+h(m,n)
	\end{align*}
	and
	\begin{align*}
	h(m,n+n')&= m\cdot   \varepsilon^{G}(n+n')-  \varepsilon^{G}(n+n') \\
	&= m\cdot (  \varepsilon^{G}(n)+  \varepsilon^{G}(n'))-  \varepsilon^{G}(n')-  \varepsilon^{G}(n) \\
	&= (m\cdot   \varepsilon^{G}(n)-  \varepsilon^{G}(n))+  \varepsilon^{G}(n)+(m\cdot  \varepsilon^{G}(n')-  \varepsilon^{G}(n'))-  \varepsilon^{G}(n)\\
	&=h(m,n)+n\cdot h(m,n')
	\end{align*}
	\item Let $m\in M$, $n\in N$ and $p\in P$. Then
	\begin{align*}
	h(p\cdot m,p\cdot n)&= h(\varepsilon^{  H}(p)+m-\varepsilon^{  H}(p),p\cdot n) \\
	&= (\varepsilon^{  H}(p)+m-\varepsilon^{  H}(p))\cdot   \varepsilon^{G}(p\cdot n)-  \varepsilon^{G}(p\cdot n)\\
	&= (\varepsilon^{  H}(p)+m-\varepsilon^{  H}(p))\cdot (\varepsilon^{  H}(p)\cdot  \varepsilon^{G}(n))-(\varepsilon^{  H}(p)\cdot  \varepsilon^{G}(n))\\
	&= (\varepsilon^{  H}(p)+m)\cdot   \varepsilon^{G}(n)-(\varepsilon^{  H}(p)\cdot  \varepsilon^{G}(n))\\
	&= \varepsilon^{  H}(p)\cdot (m\cdot   \varepsilon^{G}(n))+\varepsilon^{  H}(p)\cdot(-  \varepsilon^{G}(n))\\
	&= \varepsilon^{  H}(p)\cdot (m\cdot   \varepsilon^{G}(n)-  \varepsilon^{G}(n))\\
	&=p\cdot h(m,n)
	\end{align*}
\end{enumerate}
Now let  $(f=(f_1,f_0),g=(G,g_0))\colon (  G,  H,\partial)\rightarrow(  G',  H',\partial')$ be a morphism in $\XMod(\GpGd)$ then $\delta_1(f,g)=(f_L=f_1,f_M=G,f_N=f_0,f_P=g_0)$ is a morphism of crossed modules over group-groupoids.

Conversely, define a functor $\eta\colon\XSq(\Gp)\rightarrow\XMod(\GpGd)$ by following way: Let $\mathcal{X}=(L,N,M,P)$ be a crossed square over groups. Let $  G$ and $  H$ be the corresponding group-groupoids to crossed modules $(L,N,\lambda')$ and $(M,P,\mu)$, respectively. That is $G=L\rtimes N$, $G_0=N$, $H=M\rtimes P$ and $H_0=P$. Moreover, $\partial_1=\lambda\times\nu$ and $\partial_0=\nu$ whilst the action of $H$ on $G$ is given by
\[\begin{array}{ccl}
 \left( M\rtimes P \right) \times \left( L\rtimes N \right)  & \rightarrow & \left( L\rtimes N \right) \\
\left( (m,p), (l.n) \right)      & \mapsto     & (m,p)\cdot(l,n)=\left(m\cdot(p\cdot l)+h(m,p\cdot n), p\cdot n \right).
\end{array}\]
Here $(L\rtimes N,M\rtimes P,\lambda\times\nu)$ becomes a crossed module with the action given above. This crossed module is called the semi-direct product of crossed modules. Hence $\eta_0(\mathcal{X})=(  G,  H,\partial)$ is a crossed module over group-groupoids. Now, let $(f_L,f_M,f_N,f_P)$ be a morphism in $\XSq(\Gp)$. Then $\eta_1(f_L,f_M,f_N,f_P)=(f=(f_L\times f_N,f_N),g=(f_M\times f_P,f_P))$ is morphism in $\XMod(\GpGd)$.

A natural equivalence $S\colon \eta\delta\Rightarrow 1_{\XMod(\GpGd)}$ is given by the map $S_{(  G,  H,\partial)}\colon\eta\delta(  G,  H,\partial)\rightarrow (  G,  H,\partial)$ where $S_{(  G,  H,\partial)}$ is identity on both $G_0$ and $H_0$, and defined by $a\mapsto (a-  \varepsilon^{G}(d_{1}^{  G}(a)),d_{1}^{  G}(a))$ on $G$ and $b\mapsto (b-\varepsilon^{  H}(d_{1}^{  H}(b)),d_{1}^{  H}(b))$ on $H$.

Conversely, for a crossed square over groups $\mathcal{X}$, a natural equivalence $T\colon 1_{\XSq(\Gp)}\Rightarrow \delta\eta$ is given by the map $T_{\mathcal{X}}\colon \mathcal{X}\rightarrow \delta\eta(\mathcal{X})$ where $T_{\mathcal{X}}$ is identity on $P$ and $M$ and and defined by $m\mapsto (m,0)$ and $l\mapsto (l,0)$.

Other details are straightforward and so is omitted.
\end{proof}

\section{Conclusion}

Results obtained in this paper can be given in a more generic cases such as for an arbitrary category of groups with operations, or for an arbitrary modified category of interest etc. Moreover, notions of lifting \cite{Mu-Sa}, covering \cite{Br-Mu1} and actor crossed module \cite{KNor} and homotopy of crossed module morphisms (particularly derivations) can be interpreted in the categories mentioned in this manuscript.

\small{

}

\end{document}